\begin{document}
\newcommand{\TM}{\stackrel{\cdot}{\to}}
\newcommand{\half}{\frac12}
\newcommand{\eqdef}{\stackrel{{\mathrm def}}{=}}
\newcommand{\M}{{\mathcal M}}
\newcommand{\loc}{{\mathrm{loc}}}
\newcommand{\dx}{\,\mathrm{d}x}
\newcommand{\core}{C_0^{\infty}(\Omega)}
\newcommand{\sob}{W^{1,p}(\Omega)}
\newcommand{\sobloc}{W^{1,p}_{\mathrm{loc}}(\Omega)}
\newcommand{\merhav}{{\mathcal D}^{1,p}}
\newcommand{\be}{\begin{equation}}
\newcommand{\ee}{\end{equation}}
\newcommand{\mysection}[1]{\section{#1}\setcounter{equation}{0}}
%%%%%%%%%%%%%%%
\newcommand{\bea}{\begin{eqnarray}}
\newcommand{\eea}{\end{eqnarray}}
\newcommand{\bean}{\begin{eqnarray*}}
\newcommand{\eean}{\end{eqnarray*}}
\newcommand{\thkl}{\rule[-.5mm]{.3mm}{3mm}}
%%%%%%%%%%%%%%%%%%%%%%%%%%%
\newcommand{\cw}{\stackrel{D}{\rightharpoonup}}
\newcommand{\id}{\operatorname{id}}
\newcommand{\supp}{\operatorname{supp}}
\newcommand{\wlim}{\mbox{ weak-lim }}
\newcommand{\mymu}{{x_N^{-p_*}}}
\newcommand{\R}{{\mathbb R}}
\newcommand{\N}{{\mathbb N}}
\newcommand{\Z}{{\mathbb Z}}
\newcommand{\Q}{{\mathbb Q}}
\newcommand{\abs}[1]{\lvert#1\rvert}
%%%%%%%%%%%
\newtheorem{theorem}{Theorem}[section]
\newtheorem{corollary}[theorem]{Corollary}
\newtheorem{lemma}[theorem]{Lemma}
\newtheorem{definition}[theorem]{Definition}
\newtheorem{remark}[theorem]{Remark}
\newtheorem{proposition}[theorem]{Proposition}
\newtheorem{assertion}[theorem]{Assertion}
\newtheorem{problem}[theorem]{Problem}
%%%%%%%%%%%%%%%%%%
\newtheorem{conjecture}[theorem]{Conjecture}
\newtheorem{question}[theorem]{Question}
\newtheorem{example}[theorem]{Example}
\newtheorem{Thm}[theorem]{Theorem}
\newtheorem{Lem}[theorem]{Lemma}
\newtheorem{Pro}[theorem]{Proposition}
\newtheorem{Def}[theorem]{Definition}
\newtheorem{Exa}[theorem]{Example}
\newtheorem{Exs}[theorem]{Examples}
\newtheorem{Rems}[theorem]{Remarks}
\newtheorem{Rem}[theorem]{Remark}

\newtheorem{Cor}[theorem]{Corollary}
\newtheorem{Conj}[theorem]{Conjecture}
\newtheorem{Prob}[theorem]{Problem}
\newtheorem{Ques}[theorem]{Question}
\newcommand{\pf}{\noindent \mbox{{\bf Proof}: }}
\newcommand{\Lag}{{\mathcal L}}
%%%%%%%%%%%%%%%%%%
%\newenvironment{proof}{{\bf Proof.}}{\hfill $\bowtie$\vskip4mm}

\renewcommand{\theequation}{\thesection.\arabic{equation}}
\catcode`@=11 \@addtoreset{equation}{section} \catcode`@=12

%\begin{titlepage}

\title{On a version of Trudinger-Moser inequality with M\"obius shift invariance}
\author{Adimurthi\\
 {\small Centre of Applicable Mathematics}\\ {\small
 Tata Institute of Fundamental Research}\\
 {\small P.O.Box No. 1234}\\
{\small Bangalore - 560 012, India}\\
{\small aditi@aditi@math.tifrbng.res.in}\\\and Kyril Tintarev
%\thanks{Research supported by TIFR-CAM in Bangalore and by a travel grant from the Swedish Research Council}
\\{\small Department of Mathematics}\\{\small Uppsala University}\\
{\small P.O.Box 480}\\
{\small SE-751 06 Uppsala, Sweden}\\{\small
kyril.tintarev@math.uu.se}}
%\date{}
\maketitle
\begin{abstract}
The paper raises a question about the optimal critical nonlinearity for the Sobolev space in two dimensions, connected to loss of compactness, and discusses the pertinent concentration compactness framework.
We study properties of the improved version of the Trudinger-Moser inequality on the open unit disk $B\subset\R^2$, recently proved by G. Mancini and K. Sandeep \cite{Sandeep}. Unlike the original Trudinger-Moser inequality, this inequality is invariant with respect to M\"obius automorphisms of the unit disk, and as such is a closer analogy of the critical nonlinearity $\int |u|^{2^*}$ in the higher dimension than the original Trudinger-Moser nonlinearity.
\\[2mm]
\noindent  2000  \! {\em Mathematics  Subject  Classification.}
Primary  \! 35J20, 35J60; Secondary \! 46E35, 47J30, 58J70.\\[1mm]
 \noindent {\em Keywords.} Trudinger-Moser inequality, elliptic problems in two dimensions, concentration compactness, weak convergence, Palais-Smale sequences.
\end{abstract}
%\end{titlepage}

%%%%%%%%%%%%%%%%%%%%%%%%%%%%%%%%%%%%%%%%%%%%%%%%%

\mysection{Introduction}
In this paper we study an inequality that improves the classical (Pohozhaev)-Trudinger-Moser inequality (\cite{Pohozhaev}, \cite{Trudinger}, \cite{Moser}) on a unit disk $B$  in $\R^2$:
\be
\label{TM}
\sup_{u\in H_0^1(B),\|\nabla u\|_2\le 1}\int_{B} e^{4\pi u^2}dx<\infty.
\ee
The result below has been recently proved by Mancini and Sandeep \cite{Sandeep} (for the invariant formulation in terms of hyperbolic space $\mathbb H^2$ see Theorem~\ref{thm:HTM+} below.)
\begin{theorem}
 \label{thm:WTM} Let $B$ be an open unit disk in $\R^2$. The following relation holds true:
\be
\label{WTM}
\sup_{u\in H_0^1(B),\|\nabla u\|_2\le 1}\int_{B} \frac{e^{4\pi u^2}-1}{(1-|x|^2)^2}dx<\infty.
\ee
\end{theorem}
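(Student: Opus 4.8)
The plan is to recast \eqref{WTM} on the hyperbolic disk, reduce it by symmetrization to a one--dimensional estimate, and then invoke a Moser-type lemma with the sharp constant. Endow $B$ with the Poincar\'e metric $g_{\mathbb H}=\tfrac{4}{(1-|x|^2)^2}(dx_1^2+dx_2^2)$, of Gauss curvature $-1$; then $\tfrac{dx}{(1-|x|^2)^2}=\tfrac14\,dV_{\mathbb H}$, and, two--dimensional Dirichlet integrals being conformally invariant, $\int_B|\nabla u|^2\,dx=\int_{\mathbb H^2}|\nabla_{\mathbb H}u|^2\,dV_{\mathbb H}$. Hence \eqref{WTM} is equivalent to
\[
\sup\Bigl\{\,\int_{\mathbb H^2}\bigl(e^{4\pi u^2}-1\bigr)\,dV_{\mathbb H}\ :\ \int_{\mathbb H^2}|\nabla_{\mathbb H}u|^2\,dV_{\mathbb H}\le1\,\Bigr\}<\infty,
\]
the supremum being over the closure of $C^\infty_c(\mathbb H^2)$ in the Dirichlet norm; this is the content of Theorem~\ref{thm:HTM+}. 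By density one may take $u\in C^\infty_c(\mathbb H^2)$. Since the left--hand side is invariant under the isometries of $\mathbb H^2$ and $t\mapsto e^{4\pi t^2}-1$ is nondecreasing on $[0,\infty)$, I would replace $u$ by its hyperbolic Schwarz symmetrization $u^*$ about the origin: by the P\'olya--Szeg\H{o} inequality on $\mathbb H^2$ the Dirichlet energy stays $\le1$, while equimeasurability gives $\int_{\mathbb H^2}(e^{4\pi u^2}-1)\,dV_{\mathbb H}=\int_{\mathbb H^2}(e^{4\pi(u^*)^2}-1)\,dV_{\mathbb H}$. Thus it suffices to treat $u=u(\rho)$, nonincreasing in the geodesic distance $\rho$ to the origin and vanishing for $\rho$ large.

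For such $u$, change variables to $s:=\operatorname{Vol}_{\mathbb H}(B_\rho)=2\pi(\cosh\rho-1)$, so that $ds=2\pi\sinh\rho\,d\rho$ and $(2\pi\sinh\rho)^2=s(s+4\pi)$; writing $f(s):=u(\rho(s))$, the inequality to be proved becomes the one--dimensional statement
\[
f\ \text{nonincreasing on}\ [0,\infty),\ f(\infty)=0,\ \int_0^\infty f'(s)^2\,s(s+4\pi)\,ds\le1\ \Longrightarrow\ \int_0^\infty\bigl(e^{4\pi f(s)^2}-1\bigr)\,ds\le C ,
\]
with $C$ absolute. To handle this I would substitute $w:=\int_s^\infty\frac{dt}{t(t+4\pi)}=\frac1{4\pi}\log\!\bigl(1+\tfrac{4\pi}{s}\bigr)$, a bijection of $(0,\infty)$ onto itself; it turns the constraint into $\int_0^\infty g'(w)^2\,dw\le1$ for $g(w):=f(s(w))$, which is nondecreasing with $g(0)=0$, and rewrites the quantity to estimate as $\int_0^\infty\bigl(e^{4\pi g(w)^2}-1\bigr)\dfrac{16\pi^2 e^{4\pi w}}{(e^{4\pi w}-1)^2}\,dw$.

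I would then split the last integral at $w=1$. On $(0,1)$ the weight is $O(w^{-2})$ while, by Cauchy--Schwarz, $4\pi g(w)^2\le 4\pi w\le4\pi$, whence $e^{4\pi g^2}-1\lesssim g(w)^2$; the one--dimensional Hardy inequality (legitimate since $g(0)=0$) bounds this part by $\lesssim\int_0^1 g'(w)^2\,dw\le1$ --- this is the contribution of the region $s\to\infty$, i.e.\ of spatial infinity in $\mathbb H^2$, and it is controlled precisely by the spectral gap $\lambda_1(\mathbb H^2)=\tfrac14$. On $(1,\infty)$ the weight is $O(e^{-4\pi w})$, so that part is $\lesssim\int_1^\infty e^{4\pi g(w)^2-4\pi w}\,dw$, which after the change of scale $\tau=4\pi w$ is an integral of the form $\int_0^\infty e^{G(\tau)^2-\tau}\,d\tau$ with $G(0)=0$, $\int_0^\infty G'(\tau)^2\,d\tau\le1$; this is exactly the quantity controlled by Moser's lemma, whose proof extracts from the deficit in Cauchy--Schwarz over $(0,\tau)$ the decay needed for integrability and is the classical sharp computation --- the sole reason the constant $4\pi$, rather than a smaller one, is admissible. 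Passing back to general $u$ by Fatou would finish the proof. The heart of the matter is this one--dimensional lemma with the \emph{modified weight} $s(s+4\pi)$: the extra factor $s+4\pi$, which near $s=\infty$ makes the constraint behave like $\int f'(s)^2 s^2\,ds\le1$, is exactly what prevents $f$ from being large on a set of large hyperbolic measure --- without it the inequality is false (a constant on a large ball already violates it), and with it the sharp constant of the classical Trudinger--Moser inequality \eqref{TM} is recovered on $\mathbb H^2$.
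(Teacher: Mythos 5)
Your proof is correct, but it takes a genuinely different route from the one in the paper. You pass to the hyperbolic picture (as the paper also does in Theorem~\ref{thm:HTM+}), but you then symmetrize: hyperbolic Schwarz rearrangement plus P\'olya--Szeg\H{o} reduces the problem to radial profiles, after which the volume coordinate $s=2\pi(\cosh\rho-1)$ and the substitution $w=\frac1{4\pi}\log(1+\frac{4\pi}{s})$ turn it into a one--dimensional estimate which you split into a Hardy piece near $w=0$ (controlling spatial infinity via the constraint weight $s(s+4\pi)$, i.e.\ the spectral gap $\lambda_1(\mathbb H^2)=\tfrac14$) and a classical Moser--lemma piece for $w$ large. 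Your computations check out: $(2\pi\sinh\rho)^2=s(s+4\pi)$, the Dirichlet constraint becomes $\int_0^\infty g'(w)^2\,dw\le1$, the Jacobian $\frac{16\pi^2 e^{4\pi w}}{(e^{4\pi w}-1)^2}$ is $O(w^{-2})$ near $0$ and $O(e^{-4\pi w})$ near $\infty$, and the rescaling $\tau=4\pi w$, $G=\sqrt{4\pi}\,g$ lands exactly on Moser's sharp one--dimensional lemma. This is, in spirit, the rearrangement proof that the paper attributes to Mancini--Sandeep~\cite{Sandeep} and explicitly sets aside. The paper's own proof of Theorem~\ref{thm:HTM+} avoids symmetrization entirely: it covers $\mathbb H^2$ by translates $\eta_\zeta W$ of a fixed small disk under M\"obius shifts with uniformly bounded multiplicity (Corollary~\ref{cor:cover}), applies a localized Trudinger--Moser bound with the linearization trick of Lemma~\ref{lem:TMS+} on the (all but finitely many) pieces where the local norm is small, and handles the remaining finitely many pieces by the plain Trudinger--Moser inequality; the Hardy inequality~\eqref{lambda1} again controls the sum of the local norms. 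The trade--off is real: your symmetrization argument is shorter, closer to Moser's original computation, and is the natural route toward sharp constants or extremals in the radial class, but it relies crucially on the pointwise rearrangement structure of the integrand and on the space form geometry; the paper's covering argument makes no use of symmetry of the integrand, is robust under perturbations of the nonlinearity and of the manifold, and feeds directly into the cocompactness and profile--decomposition machinery (Theorems~\ref{thm:coco} and~\ref{abstractcc}) that the rest of the paper is built on.
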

We give a different proof to this inequality, based on coverings defined by M\"obius transformations, rather than on rearrangements on the hyperbolic space like in \cite{Sandeep}, which provides insights for further results.
An elementary corollary of this inequality is the Trudinger-Moser inequality in the exterior of the unit disk which immediately follows from \eqref{WTM} by the change of variable $x\mapsto x/|x|^2$.
\begin{corollary}
  Let $B$ be an open unit disk in $\R^2$. The following relation holds true:
\be
\label{WTMe}
\sup_{u\in \mathcal D^{1,2}_0(\R^2\setminus B),\|\nabla u\|_2\le 1}\int_{\R^2\setminus B} \frac{e^{4\pi u^2}-1}{(|x|^2-1)^2}dx<\infty.
\ee
\end{corollary}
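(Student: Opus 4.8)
The plan is to obtain \eqref{WTMe} from \eqref{WTM} by pulling back along the inversion $\iota\colon x\mapsto x/|x|^2$, a conformal involution of $\R^2\cup\{\infty\}$ that interchanges the punctured disk $B\setminus\{0\}$ with the exterior domain $\R^2\setminus\overline B$ and fixes $\partial B$ pointwise. Given $u\in\mathcal D^{1,2}_0(\R^2\setminus B)$, I would set $v(y)\eqdef u(y/|y|^2)$ for $y\in B$. The first step is to check that $v\in H_0^1(B)$ with $\|\nabla v\|_{L^2(B)}=\|\nabla u\|_{L^2(\R^2\setminus B)}$: if $u_k\in C_0^\infty(\R^2\setminus\overline B)$ converges to $u$ in the Dirichlet norm, then $v_k\eqdef u_k\circ\iota\in C_0^\infty(B\setminus\{0\})$, and since the Dirichlet integral $\int|\nabla\cdot|^2$ is conformally invariant in dimension two we get $\|\nabla(v_k-v_j)\|_2=\|\nabla(u_k-u_j)\|_2$; by Poincar\'e's inequality on the bounded set $B$ the $v_k$ are then Cauchy in $H_0^1(B)$, hence converge to an element of $H_0^1(B)$ that must coincide (a.e.) with $v$. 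Conversely, because an isolated point has zero $H^1$-capacity in $\R^2$, $C_0^\infty(B\setminus\{0\})$ is dense in $H_0^1(B)$, so $u\mapsto v$ is in fact an isometric isomorphism of $\mathcal D^{1,2}_0(\R^2\setminus B)$ onto $H_0^1(B)$, matching the two constraint sets $\{\|\nabla u\|_2\le1\}$ and $\{\|\nabla v\|_2\le1\}$.

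The second step is the bookkeeping of the weighted integral under $x=y/|y|^2$. Here $|x|^2=|y|^{-2}$, so $(|x|^2-1)^2=(1-|y|^2)^2/|y|^4$, while the Jacobian of the involution gives $\dx=|y|^{-4}\,\mathrm dy$; multiplying, the powers of $|y|$ cancel, so that
\be
\frac{\dx}{(|x|^2-1)^2}=\frac{\mathrm dy}{(1-|y|^2)^2}.
\ee
Since moreover $e^{4\pi u(x)^2}-1=e^{4\pi v(y)^2}-1$, the change of variables yields, for every $u\in\mathcal D^{1,2}_0(\R^2\setminus B)$,
\be
\int_{\R^2\setminus B}\frac{e^{4\pi u^2}-1}{(|x|^2-1)^2}\dx=\int_{B}\frac{e^{4\pi v^2}-1}{(1-|y|^2)^2}\,\mathrm dy .
\ee
Taking the supremum over $\|\nabla u\|_2\le1$ and using $\|\nabla v\|_2=\|\nabla u\|_2$, the right-hand side is at most the supremum in \eqref{WTM}, which is finite by Theorem~\ref{thm:WTM}; this gives \eqref{WTMe}.

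The computation itself is elementary, so I do not expect a genuine obstacle. The only point that warrants care is the identification of the function spaces under the inversion --- making sure the image of $\mathcal D^{1,2}_0(\R^2\setminus B)$ is exactly $H_0^1(B)$, neither larger nor smaller --- which rests on the two specifically two-dimensional facts used above: conformal invariance of the Dirichlet energy, and removability of isolated points for $H^1$. Once these are recorded, \eqref{WTMe} follows from \eqref{WTM} with no further work.
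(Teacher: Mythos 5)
Your argument is correct and is exactly the route the paper indicates: the authors state that the corollary ``immediately follows from \eqref{WTM} by the change of variable $x\mapsto x/|x|^2$'' and leave the verification to the reader, which is precisely what you carried out (conformal invariance of the Dirichlet integral in two dimensions, the Jacobian/weight cancellation, and the identification of $\mathcal D^{1,2}_0(\R^2\setminus B)$ with $H_0^1(B)$ via zero capacity of the origin).
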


For the Sobolev space $H^1$ in two dimensions, the (Pohozhaev)-Trudinger-Moser functional $\int e^{4\pi u^2}dx$ is widely accepted as a standard nonlinearity of critical growth, that is, as a counterpart of $\int_{\R^N}|u|^\frac{2N}{N-2}dx$ in the case $N>2$. This view is justified by the following analogy. When $N>2$, the functional $\int_{\R^N}|u|^pdx$ is continuous in $H^1(\R^N)$ when $p\in(2,2^*]$, $2^*\eqdef \frac{2N}{N-2}$, and it is unbounded on any bounded subset of $H^1(\R^N)$ when $p>2^*$. In the case $N=2$, the functional  $\int_{\R^2} e^{p u^2}dx$ on the set $\{u\in H^1(\R^2),\| u\|_{H^1}\le 1\}$ is bounded if  and only if $p\le 4\pi$ (see \cite{Ruf}). The analogy extends also to weak continuity properties. For obvious reason of translation invariance, there is no weak continuity if the domain of integration is the whole $\R^N$. If, however, $\Omega\subset\R^N$, $N>2$, is a bounded domain, then the functional $\int_{\Omega}|u|^pdx$ is weakly continuous on $H^1_0(\Omega)$ whenever  $p<2^*$, and, similarly, if, $N=2$, the functional 
 $\int_{\Omega} e^{p u^2}dx$ is weakly continuous on $\{u\in H_0^1(\Omega),\| u\|_{H_0^1}\le 1\}$ whenever $p<4\pi$.

This analogy, however, does not extend to the critical nonlinearities, $p=2^*$ resp. $p=4\pi$. When  $N>2$,  the functional  $\int_{\Omega}|u|^{2^*}dx$ is not weakly continuous at any point, but if $N=2$, the functional $\int_{\Omega} e^{4\pi u^2}dx$ is sequentially weakly continuous at every point of $\{u\in H_0^1(\Omega),\|\nabla u\|_2\le 1\}\setminus\{0\}$ (see \cite{PLL2b}).

Lack of compactness for the critical nonlinearity in the case $N>2$ can be traced to the following symmetries of the space
$\mathcal D^{1,2}(\R^N)$  (defined as the completion of $C_0^\infty$ with respect to the gradient norm $\|\nabla \cdot\|_2$):
$$
D_N=\{g_{s,y}u(x)=2^{\frac{N-2}{2}s}u(2^s(x-y)), s\in\R, y\in\R^N\},
$$
that is, to actions of translations and dilations.
These operators are linear isometries on both $\mathcal D^{1,2}(\R^N)$ and $L^{2^*}(\R^N)$, so that for every $u\in \mathcal D^{1,2}(\R^N)$, $u_k\eqdef g_{s_k,y_k}u\rightharpoonup 0$ whenever $|s_k|+|y_k|\to\infty$, while the respective norms of $u_k$ coincide with that of $u$. A similar counterexample suitable for a bounded domain  is given by $y_k=0$, $s_k\to+\infty$ and $u$ supported on a convex compact subset.
Lack of compactness in the imbedding of $\mathcal D^{1,2}(\R^N)\subset L^{2^*}(\R^N)$ can be, in some sense attributed entirely to the group $D_N$, namely, the compactness is restored if one ``factors out" the action of the
group (see e.g. Lemma~5.3, \cite{ccbook}):
$$
\forall s_k\in\R, y_k\in\R^N, g_{s_k,y_k}u_k\rightharpoonup 0 \text{ in }  \mathcal D^{1,2}(\R^N) \Rightarrow u_k\to 0 \text{ in } L^{2^*}(\R^N).
$$

Weak continuity properties of the critical nonlinearity in the case $N=2$ indicate that there is no known non-compact group, other than Euclidean shifts, that preserves both the Sobolev norm and the Trudinger-Moser nonlinearity $\int_{\R^2} e^{4\pi u^2}dx$. The matter is further complicated by the fact that in this case there is no dilation-invariant functional space $\mathcal D^{1,2}$: the completion of $C_0^\infty(\R^2)$ with respect to the gradient norm lacks continuous imbedding even into the space of distributions. On the other hand, the problem in the space $H_0^1(B)$ (which we in what follows consider equipped with the equivalent Sobolev norm $\|\nabla u\|_2$), admits two groups of linear unitary operators, defined below, that play a role similar, respectively, to actions of dilations and of translations.  

The Trudinger-Moser functional $\int_B e^{4\pi u^2}dx$ fails to be invariant with respect to either of these groups.
This, however, happens to testify not for irrelevance of these groups but for an observation that the Trudinger-Moser functional is not the sharp critical nonlinearity and can be replaced by a stronger expression. It remains an open problem, however, to find a sharp critical nonlinearity that is invariant with respect to the product group. The details are as follows.

\subsection{Dilation-invariant nonlinearity}
In this paragraph we summarize results of  \cite{AdiTindoO}.
Let $H_{0,r}^1(B)$ denote the subspace of radial functions of  $H_{0}^1(B)$.
The transformations
\begin{equation}
\label{dilations}
h_su(r)\eqdef s^{-\frac12}u(r^s), \, u\in H_{0,r}^1(B) \,s>0,
\end{equation}
preserve the norm  $\|\nabla u\|_2$ of $H_{0,r}^1(B)$, as well as the 2-dimensional Hardy functional
$\int_B\frac{u^2}{|x|^2(\log 1/|x|)^2}dx$ (for the Hardy inequality in dimension 2 see Adimurthi and Sandeep \cite{AdiSandeep}). Furthermore, these transformation preserve the norms of a family of weighted $L^p$-spaces, $p=[2,\infty]$, analogous to the weighted-$L^p$ scale with $p\in[2,2^*]$ produced by H\"older inequality in the case $N>2$, interpolating between the Hardy term $\int\frac{u^2}{|x|^2}dx$ and the critical nonlinearity $\int |u|^{2^*}dx$.
In the case $N=2$, the critical exponent is formally $2^*=+\infty$ and the dilation-invariant $L^{2^*}$-norm is
\be
\label{infnorm}
\|u\|_\infty=\sup_{r\in(0,1)}\frac{|u(r)|}{(\log\frac{1}{r})^{1/2}}.
\ee

The Trudinger-Moser nonlinearity is not, however, dilation-invariant. On the other hand it is continuous with respect to the norm \eqref{infnorm}, which means that  the $L^\infty$- nonlinearity \eqref{infnorm} gives a sharp, dilation-invariant improvement of the Trudinger-Moser nonlinearity, even if only for the subspace of radial functions.

\subsection{M\"obius transformations}
We refer the reader to the Appendix for definitions and basic properties connected to M\"obius transformations and the Poincar\'e disk.
Adopting, for the sake of convenience, the complex numbers notation $z=x_1+ix_2$ for points $(x_1,x_2)$ on the plane, we consider the following subset of automorphisms of the unit disk, known as M\"obius transformations.
\be
\label{eta0}
\eta_\zeta (z)= \frac{z-\zeta}{1-\bar\zeta z}, \zeta\in B.
\ee
Since the maps \eqref{eta0} are conformal automorphisms of the unit disk, one has $|\nabla u\circ \eta_\zeta|_2=|\nabla u|_2$, which implies that the M\"obius shifts $u\mapsto u\circ\eta_\zeta$, $\zeta\in B$, are unitary operators in $H_0^1(B)$.
The gradient norm on the disk is the coordinate representation of the quadratic form of Laplace-Beltrami operator on $\mathbb H^2$ regarded as the Poincar\'e disk model, which allows to identify $H_0^1(B)$ as representation of the space $\dot H^1(\mathbb H^2)$, defined by completion of $C_0^\infty(\mathbb H^2)$ with respect to the gradient norm. M\"obius shifts give rise therefore to unitary operators on $\dot H^1(\mathbb H^2)$.

Furthermore, under the Poincar\'e disk model, the maps \eqref{eta0} define isometries on  $\mathbb H^2$. Consequently, we have nonlinearities on the unit disk, invariant with respect to M\"obius shifts, of the form (in the manifold notation and in the terms of Poincar\'e disk):
$$\int_{\mathbb H^2}F(u)d\mu=\int_BF(u)\frac{dx}{(1-|x|^2)^2}.$$
In particular, once the inequality \eqref{WTM} is verified, the functional $\int_B\frac{e^{4\pi u^2}}{(1-|x|^2)^2}dx$ possesses both critical growth and invariance with respect to M\"obius shifts.
\subsection{Main results}
In addition to Theorem~\ref{thm:WTM}, which trivially follows from its hyperbolic space counterpart, Theorem~\ref{thm:HTM+}  proved in Section~2, we study weak continuity properties of subcritical (but not weakly continuous) nonlinearities of the form $\int_BF(u)\frac{dx}{(1-|x|^2)^2}$,  and existence of maximizers for a related isoperimetric problem.  We prove
\begin{theorem}
\label{thm:coco}
Let $F\in C(\R)$ satisfy, with some $C>0$, $r>2$ and $p<4\pi$,
\be
\label{cocoF}
|F(s)|\le C|s|^re^{ps^2}.
\ee
If  $u_k\in H_0^1(B)$, $\|\nabla u_k\|_2=1$, satisfies the condition
\be
\label{cw}
\text{ For every sequence }  \zeta_k\in B,u_k\circ\eta_{\zeta_k}\rightharpoonup 0,
\ee
then
\be
\label{cocodmu}
\int_BF(u_k)d\mu\to 0.
\ee
\end{theorem}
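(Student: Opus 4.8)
The plan is to argue by contradiction and reduce the statement to the case of a sequence that is weakly convergent (to zero, by a normalization argument) after a suitable Möbius correction, and then to exploit the Trudinger--Moser-type inequality \eqref{WTM} together with the subcritical exponent $p<4\pi$ to upgrade weak convergence to vanishing of the nonlinear functional. Suppose, contrary to \eqref{cocodmu}, that along a subsequence $\int_B F(u_k)\,d\mu \ge \delta > 0$. Since $\|\nabla u_k\|_2 = 1$, after passing to a further subsequence we may assume $u_k \rightharpoonup u$ in $H_0^1(B)$ for some $u$; taking $\zeta_k = 0$ in \eqref{cw} forces $u = 0$, so in fact $u_k \rightharpoonup 0$ in $H_0^1(B)$, hence $u_k \to 0$ strongly in $L^q_{\loc}(B)$ for every $q<\infty$ and a.e.\ on $B$.

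The next step is to localize the defect of compactness in the measure $d\mu = (1-|x|^2)^{-2}\dx$, which is a finite multiple of Lebesgue measure on any set $\{|x|\le 1-\epsilon\}$ but blows up at $\partial B$. Because $u_k\to 0$ in $L^r_{\loc}$ and the bound \eqref{cocoF} gives $|F(u_k)|\le C|u_k|^r e^{pu_k^2}$, the contribution to $\int_B F(u_k)\,d\mu$ from any fixed compact subset of $B$ tends to zero once we control $\int e^{p' u_k^2}$ on that compact set for some $p'$ slightly larger than $p$ but still below $4\pi$, via \eqref{TM} (or \eqref{WTM}). Hence the positive mass $\delta$ must asymptotically concentrate near $\partial B$: there exist points $\zeta_k\to\partial B$ (hence $|\zeta_k|\to 1$) and radii $\rho_k$ such that a definite fraction of $\int_B F(u_k)\,d\mu$ lives in a hyperbolic ball $B_{\mathbb H^2}(\zeta_k,R)$ of fixed hyperbolic radius $R$. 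This is where the Möbius covering philosophy of Section~2 enters: one uses a covering of $\mathbb H^2$ by hyperbolic balls of fixed radius with bounded overlap, invariant under the group, to see that if the mass does not vanish it must persist in at least one ball of the covering, which after applying $\eta_{\zeta_k}$ is centered at the origin. I expect this concentration/localization step — making precise that the only way $\int_B F(u_k)\,d\mu$ can fail to vanish is concentration along a Möbius orbit, as opposed to some more diffuse escape to the boundary — to be the main obstacle, and it is here that the invariance of $d\mu$ and the conformal invariance $\|\nabla(u\circ\eta_\zeta)\|_2=\|\nabla u\|_2$ are essential.

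Granting the localization, apply the Möbius shift: set $v_k \eqdef u_k\circ\eta_{\zeta_k}$. Then $\|\nabla v_k\|_2 = 1$, and by the invariance of $d\mu$ under $\eta_{\zeta_k}$ we have $\int_{B_{\mathbb H^2}(0,R)} F(v_k)\,d\mu \ge c\delta > 0$ for all large $k$. By hypothesis \eqref{cw} applied to the sequence $\zeta_k$, $v_k \rightharpoonup 0$ in $H_0^1(B)$, hence $v_k \to 0$ in $L^r(B_{1-\epsilon})$ and a.e.\ on the fixed hyperbolic ball $B_{\mathbb H^2}(0,R)$, which is a compact subset of $B$. Now choose $p'\in(p,4\pi)$; the Moser--Trudinger inequality \eqref{TM} on $B$ gives $\sup_k \int_{B} e^{p' v_k^2/\|\nabla v_k\|_2^2}\dx = \sup_k\int_B e^{p' v_k^2}\dx < \infty$, so $\{e^{p v_k^2}\}$ is bounded in $L^{p'/p}$; combined with $|v_k|^r \to 0$ in $L^{(p'/p)'r}(B_{\mathbb H^2}(0,R))$ and Hölder's inequality, we get $\int_{B_{\mathbb H^2}(0,R)}|v_k|^r e^{p v_k^2}\dx \to 0$, and since $d\mu$ is comparable to $\dx$ on this fixed ball, $\int_{B_{\mathbb H^2}(0,R)} |F(v_k)|\,d\mu \to 0$, contradicting the lower bound. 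This contradiction establishes \eqref{cocodmu}.
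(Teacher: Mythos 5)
Your proposal reaches the right conclusion by a different route than the paper, but it contains a genuine gap at exactly the step you yourself flag as ``the main obstacle'': the claim that if $\int_B F(u_k)\,d\mu$ does not vanish then a definite fraction of the integral must persist in a single hyperbolic ball $B_{\mathbb H^2}(\zeta_k,R)$ of fixed radius. That statement is the ``concentration'' alternative of a concentration--compactness trichotomy; you have not ruled out the ``vanishing'' alternative, in which $\sup_{\zeta\in B}\int_{B_{\mathbb H^2}(\zeta,R)}F(u_k)\,d\mu \to 0$ for every fixed $R$ while the total integral stays $\ge\delta$. Bounded multiplicity of a covering $\{\eta_\zeta W\}_{\zeta\in Z}$ does not prevent this: the countable sum $\sum_{\zeta\in Z}\int_{\eta_\zeta W}F(u_k)\,d\mu$ can stay bounded below even if each individual term tends to zero with $k$. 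Showing that vanishing of the local suprema forces vanishing of the whole integral is precisely the cocompactness content of the theorem, and it is here that the critical exponential growth of $F$ makes the argument nontrivial; the subsequent H\"older-plus-Trudinger--Moser step on a single fixed ball is fine but never comes into play unless concentration has already been established.

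For comparison, the paper avoids the dichotomy entirely and argues directly, not by contradiction. It expands $e^{pu_k^2}$ into its power series, producing terms $\frac{p^n}{n!}\int_B u_k^{2n+r}\,d\mu$. Each individual term (with $n$ fixed) vanishes by an abstract cocompactness lemma for group actions on Hilbert spaces (Lemma~9.4 of \cite{ccbook}), applied to the M\"obius shifts on $\dot H^1(\mathbb H^2)\hookrightarrow L^q(\mathbb H^2,d\mu)$ --- this is the step that encodes exactly the phenomenon your localization step is trying to produce by hand. The infinite tail $\sum_{n\ge m}$ is controlled uniformly in $k$ using the sharp inequality \eqref{HTM+}, because the coefficient ratio $p/4\pi<1$ makes the tail geometrically small. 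The uniform tail bound is the reason the hypothesis $p<4\pi$ is needed, and it is essential: without it one cannot pass from termwise vanishing to vanishing of the series. If you want to make your concentration argument rigorous you would, in effect, be forced to prove the vanishing-implies-vanishing implication for $F$, which under exponential growth again requires this kind of series split; so the paper's route, while less geometric in appearance, is really the shorter path.

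Two smaller points. First, taking $\zeta_k=0$ to conclude $u_k\rightharpoonup0$ is correct but unnecessary for the paper's proof (it is however needed for yours, since you want a.e.\ convergence to $0$). Second, note that the Trudinger--Moser inequality needed in your H\"older step must hold for the measure $d\mu$, not $dx$; on a fixed hyperbolic ball around the origin the two measures are comparable, so this is harmless there, but it is the inequality \eqref{HTM+} that one must invoke globally, and \eqref{HTM+} is indeed what powers the tail estimate in the paper's proof.
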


This theorem is required for the following existence result.
\begin{theorem}
\label{minimizers}
Let $F\in C^1(\R)$, $\sup F>0$, satisfy  \eqref{cocoF} with some $C>0$, $r>2$ and $p<4\pi$.
If, in addition, for every $t\in(0,1)$ and  $a,b\in\R$,
\be
\label{convexity}
F(\sqrt{ta^2+(1-t)b^2})> F(\sqrt{t}a)+F(\sqrt{1-t}b),
\ee
then the maximum in
\be
\label{maxprob}
M_1\eqdef\sup_{u\in H_0^1(B), \|\nabla u\|_2=1}\int_B F(u)d\mu
\ee
is attained and for any minimizing sequence $u_k$ for \eqref{maxprob} there exists a sequence $\zeta_k\in B$ such that  $u_k\circ\eta_{\zeta_k}\rightharpoonup u\neq 0$.
converges in $H_0^1(B)$ to the point of maximum.
\end{theorem}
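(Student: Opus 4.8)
The plan is to run the concentration--compactness scheme with the M\"obius group playing the role of translations: from a maximizing sequence extract, after a M\"obius shift, a nontrivial weak limit (Theorem~\ref{thm:coco} is precisely what forbids vanishing), and then promote weak convergence to strong convergence by exploiting the strict superadditivity \eqref{convexity}. As preliminaries I would record that \eqref{cocoF} at $s=0$ forces $F(0)=0$; that $M_1<\infty$ follows from \eqref{cocoF} with $p<4\pi$, $r>2$ and the Mancini--Sandeep inequality \eqref{WTM} (equivalently Theorem~\ref{thm:HTM+}); that $M_1>0$ follows from $\sup F>0$ by a suitable test function; and that $\int_B F(u)\,d\mu\le M_1$ whenever $\|\nabla u\|_2\le1$.

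Now take $u_k$ with $\|\nabla u_k\|_2=1$ and $\int_B F(u_k)\,d\mu\to M_1$. If \eqref{cw} held along $(u_k)$, Theorem~\ref{thm:coco} would give $\int_B F(u_k)\,d\mu\to0$, contradicting $M_1>0$; hence there are $\zeta_k\in B$ with $u_k\circ\eta_{\zeta_k}\not\rightharpoonup0$, and, passing to a subsequence of the bounded sequence $u_k\circ\eta_{\zeta_k}$, $v_k:=u_k\circ\eta_{\zeta_k}\rightharpoonup w\neq0$ in $H_0^1(B)$. Since the $\eta_\zeta$ are isometries of $\mathbb H^2$, $\|\nabla v_k\|_2=1$ and $\int_B F(v_k)\,d\mu=\int_B F(u_k)\,d\mu\to M_1$; thus $(v_k)$ is maximizing with nontrivial weak limit, and the task reduces to showing $v_k\to w$ strongly in $H_0^1(B)$.

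Write $v_k=w+z_k$, $z_k\rightharpoonup0$, and pass to a further subsequence with $z_k\to0$ a.e.\ and $\|\nabla z_k\|_2^2\to1-t$, where $t:=\|\nabla w\|_2^2\in(0,1]$ (using $\|\nabla v_k\|_2^2=\|\nabla w\|_2^2+\|\nabla z_k\|_2^2+o(1)$). The analytic core is a Br\'ezis--Lieb lemma with respect to the measure $\mu$,
\be
\label{pp-bl}
\int_B F(v_k)\,d\mu=\int_B F(w)\,d\mu+\int_B F(z_k)\,d\mu+o(1),
\ee
which yields $M_1=\int_B F(w)\,d\mu+\lim_k\int_B F(z_k)\,d\mu$. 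Proving \eqref{pp-bl} is the step I expect to be the main obstacle: it amounts to equi-integrability of $\{F(v_k)\}$ and $\{F(z_k)\}$ against $d\mu$, hence to excluding Moser-type concentration (this is exactly what the strict inequality $p<4\pi$ buys) and to controlling loss of mass toward $\partial B$ where $\mu$ is singular --- and here the prefactor $|s|^r$ with $r>2$ in \eqref{cocoF} and the inequality \eqref{WTM} are used, since on the set where $v_k$ is small one has $|F(v_k)|\le C|v_k|^r$ with $\int_B|v_k|^r\,d\mu$ controlled by \eqref{WTM}.

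Finally I would force $t=1$ using \eqref{convexity}. The key algebraic fact is that for $u_1,u_2\in H_0^1(B)$ the function $h=\sqrt{u_1^2+u_2^2}\in H_0^1(B)$ satisfies $\|\nabla h\|_2^2\le\|\nabla u_1\|_2^2+\|\nabla u_2\|_2^2$ (pointwise Cauchy--Schwarz) while \eqref{convexity} gives $F(h)\ge F(u_1)+F(u_2)$ a.e., strictly wherever $u_1\neq0\neq u_2$ (points where some $u_i$ vanishes being covered because \eqref{convexity} forces $F(|s|)\ge F(s)$). To exploit this I would iterate Theorem~\ref{thm:coco} on $z_k$ and obtain a profile decomposition $v_k=w+\sum_{j\ge2}w^{(j)}\circ\eta_{\zeta_k^{(j)}}+r_k$ with $r_k$ satisfying \eqref{cw}, $\sum_{j\ge1}\|\nabla w^{(j)}\|_2^2+\lim_k\|\nabla r_k\|_2^2=1$, and, by \eqref{pp-bl} with $F(0)=0$ and $\int_B F(r_k)\,d\mu\to0$ (Theorem~\ref{thm:coco}), $M_1=\sum_{j\ge1}\int_B F(w^{(j)})\,d\mu$. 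If there were two or more profiles, M\"obius-translating two of them to a common centre so that they overlap on a set of positive $\mu$-measure and replacing them by their Pythagorean sum would give a competitor $u$ with $\|\nabla u\|_2^2\le1$ and $\int_B F(u)\,d\mu>M_1$, contradicting $\int_B F(u)\,d\mu\le M_1$; so there is the single profile $w$, whence $\int_B F(w)\,d\mu=M_1$. If moreover the residual energy $\rho:=\lim_k\|\nabla r_k\|_2^2$ were positive, then taking $\phi$ with $\|\nabla\phi\|_2^2\le\rho$ supported where $|w|$ exceeds the level past which $F$ is positive and increasing, the perturbation $\sqrt{w^2+\varepsilon^2\phi^2}$ would, for $\varepsilon$ small, have gradient norm $\le1$ yet integral $>M_1$ --- again impossible. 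Hence $\rho=0$, $r_k\to0$ in $H_0^1(B)$, and $v_k\to w$ strongly with $\|\nabla w\|_2=1$. Strong convergence and the equi-integrability above give $\int_B F(v_k)\,d\mu\to\int_B F(w)\,d\mu=M_1$: the maximum is attained at $w$, and, since $v_k=u_k\circ\eta_{\zeta_k}$, every maximizing sequence converges, after a M\"obius shift, strongly to a maximizer.
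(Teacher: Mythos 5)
Your argument is correct in outline but takes a considerably longer route than the paper's. The first step coincides: using Theorem~\ref{thm:coco} to exclude vanishing and pass to $u_k\circ\eta_{\zeta_k}\rightharpoonup u\neq0$. The Br\'ezis--Lieb identity \eqref{pp-bl} that you flag as the main analytic obstacle is indeed the key technical ingredient; the paper proves it as Lemma~\ref{BL} by truncating $F$ at a level $M$, handling the bounded part by dominated convergence and the tail by \eqref{WTM}, then letting $M\to\infty$. Where you diverge is the finish. The paper stops after one bubble: it records $\|\nabla u\|_2^2+\|\nabla(u_k-u)\|_2^2\to1$, introduces the level function $M_t=\sup_{\|\nabla u\|_2^2=t}\int_BF(u)\,d\mu$, deduces $M_t+M_{1-t}\ge M_1$ from \eqref{mass}, and shows this contradicts strict superadditivity \eqref{convexity} unless $t\in\{0,1\}$, using exactly the Pythagorean-gradient fact $\|\nabla\sqrt{tv_1^2+(1-t)v_0^2}\|_2\le1$ that you also invoke. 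You instead run the machine to the end, extracting a full profile decomposition (which is the paper's separate Theorem~\ref{abstractcc}, proved via \cite{ccbook} and \cite{FiesTin}, not a straightforward iteration of Theorem~\ref{thm:coco}) and then rule out a second profile and a positive residual by merging/perturbation. Both finishes exploit \eqref{convexity} through the same algebra; the paper's single-dichotomy version is leaner and avoids the bookkeeping of multiple bubbles.

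A few steps in your version need more care than you give them. You repeatedly use $\int_BF(u)\,d\mu\le M_1$ for every $u$ with $\|\nabla u\|_2\le1$, but $M_1$ is a supremum over the unit \emph{sphere}; to push a subunit competitor up to the sphere you need $F$ to be nondecreasing on $[0,\infty)$. This does follow from \eqref{convexity} together with $r>2$ in \eqref{cocoF} (which gives $F(\sqrt s)/s\to0$, hence nonnegative lower Dini derivative for the superadditive $s\mapsto F(\sqrt s)$), but it is a real step and should be stated. Likewise the identity $M_1=\sum_j\int_BF(w^{(j)})\,d\mu$ requires an iterated Br\'ezis--Lieb beyond Lemma~\ref{BL} as stated, and the strict integral inequality after merging profiles needs the observation that the pointwise surplus $F(\sqrt{a^2+b^2})-F(a)-F(b)$ is strictly positive on a set of positive $\mu$-measure (fine here, since a fixed nonzero profile is nonzero on a set of positive measure). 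None of these are fatal, but the paper's one-bubble argument sidesteps all of them.
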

In Section~3 we prove
Theorem~\ref{thm:coco}, Theorem~\ref{minimizers}, and a statement on the general structure of bounded sequences in $H_0^1(B)$, Theorem~\ref{abstractcc}, similar to Struwe's global compactness in \cite{StruweGC}) and to a related statement of P.-L. Lions in \cite{PLL-HF} (note also that M\"obius shifts are also involved in existence proof for the Plateau problem, \cite{StruweP}).  In Section 4, the Appendix, we summarize relevant facts about the Poincar\'e disk.

\section{Proof of the invariant Trudinger-Moser inequality}

$W\Subset B$ We start with the following elementary lemma.
\begin{lemma}
 \label{lem:TMS}
Let  $W\subset\R^2$ be an open disk of radius $\frac12$ and let
 \be
 \label{norm}
 \|u\|_W^2\eqdef \int_{W}\left(|\nabla u|^2+\lambda u^2\right)dx,\,\lambda>0.
 \ee
There exists a number $q>0$ such that
\be
\label{TMS}
\sup_{u\in H^1(W),\|u\|_W\le 1}\int_W e^{q u^2}dx<\infty.
\ee
\end{lemma}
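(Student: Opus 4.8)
The plan is to reduce the statement to the classical Trudinger--Moser inequality on a bounded domain by means of a standard extension argument, using the fact that $W$ is a \emph{bounded} domain (a disk of radius $\tfrac12$) and that the norm $\|\cdot\|_W$ controls the full $H^1(W)$ norm. First I would observe that, since $\lambda>0$, the norm $\|u\|_W$ defined in \eqref{norm} is equivalent to the standard norm of $H^1(W)$, with constants depending only on $\lambda$; hence the unit ball of $(H^1(W),\|\cdot\|_W)$ is contained in a ball of radius $R=R(\lambda)$ in the standard $H^1(W)$ norm. So it suffices to find $q>0$ with $\sup_{\|u\|_{H^1(W)}\le R}\int_W e^{qu^2}\dx<\infty$, and by rescaling $u\mapsto u/R$ inside the exponent this is the same as bounding $\int_W e^{q'v^2}\dx$ over the unit ball of $H^1(W)$ for a suitable $q'$.

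Next I would invoke a bounded linear extension operator $E\colon H^1(W)\to H^1(\R^2)$ (available since $W$ is a bounded Lipschitz domain, indeed a disk), with $\|Ev\|_{H^1(\R^2)}\le C_W\|v\|_{H^1(W)}$. Fix a ball $B'$ containing $W$; multiplying by a fixed cutoff $\chi\in C_0^\infty(B')$ equal to $1$ on $W$ we may assume $Ev$ is supported in $B'$, at the cost of enlarging $C_W$. Then for $v$ in the unit ball of $H^1(W)$ we have $w\eqdef Ev\in H^1_0(B')$ with $\|\nabla w\|_2\le \|w\|_{H^1(\R^2)}\le C_W$. The Pohozhaev--Trudinger--Moser inequality \eqref{TM} on $B'$ (rescaled from the unit disk) gives a constant $\beta>0$ and a finite bound $K$ with $\sup_{\|\nabla w\|_2\le 1,\,w\in H^1_0(B')}\int_{B'}e^{\beta w^2}\dx\le K$; applying it to $w/C_W$ yields $\int_W e^{(\beta/C_W^2) v^2}\dx\le \int_{B'}e^{(\beta/C_W^2)w^2}\dx\le K$. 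Choosing $q$ (equivalently $q'$) so that $q\le \beta/(C_W^2 R^2)$ — i.e.\ absorbing both the norm-equivalence constant $R$ and the extension constant $C_W$ into the exponent — gives \eqref{TMS}.

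The only genuinely delicate point is making sure all constants ($R$ from norm equivalence, $C_W$ from extension plus cutoff, $\beta$ and $K$ from Trudinger--Moser on $B'$) depend \emph{only} on $\lambda$ and on $W$ and not on $u$, so that a single $q>0$ works uniformly over the unit ball; but each of these is a classical quantitative statement, so this is bookkeeping rather than a real obstacle. An alternative, if one prefers to avoid the extension operator, is to apply directly a version of the Trudinger--Moser inequality valid on bounded Lipschitz domains for the full $H^1(W)$ (not $H^1_0$) — such a statement is standard — and then the lemma is immediate from norm equivalence; I would mention this as the shorter route and use the extension argument only as a self-contained fallback.
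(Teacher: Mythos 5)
Your proposal matches the paper's own proof of Lemma~\ref{lem:TMS}: the paper likewise takes a bounded extension operator $T$ from $H^1(W)$ into $H_0^1$ of a larger ball, applies the classical Trudinger--Moser inequality there, and then chooses $q\le 4\pi/\|T\|^2$ so that the extension and norm-equivalence constants are absorbed into the exponent. Your version merely spells out the cutoff and the $\|\cdot\|_W$-versus-$H^1(W)$ equivalence as separate steps, but the route is the same.
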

\begin{proof}  Let $T$ be an extension operator from $H^1(W)$ into $H_0^1(B)$.
Then \eqref{TMS} follows from
\be
\label{TMS01}
\sup_{u\in H_0^1(B),\|\nabla u\|_2\le T\|}\int_B e^{q u^2}dx<\infty,
\ee
which follows from the Trudinger-Moser inequality whenever $q\le 4\pi/\|T\|^2$
\end{proof}

%\begin{remark} An analogous result,  has been obtained by Ruf,  Theorem~1.1 \cite{Ruf}, who proved an analog %of \eqref{TMS} with the maximum taken over $H_0^1(V)$ (and where $V$ can be taken as any subset of %$\R^2$). In the result of Ruf the value of the supremum is independent of domain. It is interesting to know if the %supremum remains independent of domain when it is taken, like in \eqref{TMS}, over the larger space $H^1$.
%\end{remark}

\begin{lemma}
 \label{lem:TMS+}
Let  $W\subset\R^2$ be an open disk of radius $\frac12>0$ and let
 \be
 \label{norm+}
 \|u\|_W^2\eqdef \int_{W}\left(|\nabla u|^2+\lambda u^2\right)^2dx,\,\lambda>0.
 \ee
Let $q$ be as in Lemma~\ref{lem:TMS}. Then there is a positive constant $C=C(\lambda)$ such that for all $u\in H^1(W)$ satisfying $\|u\|_W<1$,
\be
\label{TMS+}
\int_W (e^{q u^2}-1)dx\le C\frac{\|u\|_W^2}{1-\|u\|_W^2}.
\ee
\end{lemma}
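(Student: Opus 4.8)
The plan is to deduce \eqref{TMS+} from \eqref{TMS} by a scaling argument. Write $v=u/\|u\|_W$, so that $\|v\|_W=1$ and $v$ is admissible in \eqref{TMS}; denote by $K$ the finite supremum in \eqref{TMS}. Then I expand the exponential in a power series: since $e^{qu^2}-1 = e^{q\|u\|_W^2 v^2}-1 = \sum_{n\ge 1}\frac{(q\|u\|_W^2)^n}{n!}v^{2n}$, integration over $W$ gives $\int_W(e^{qu^2}-1)\dx = \sum_{n\ge 1}\frac{(q\|u\|_W^2)^n}{n!}\int_W v^{2n}\dx$. Note that the norm in \eqref{norm+} is the square of the norm in \eqref{norm}, so the hypothesis $\|u\|_W<1$ here means precisely that the quantity $\|u\|_W$ of Lemma~\ref{lem:TMS} satisfies $\|u\|_W<1$ as well (the two norms are being tacitly identified via the relation that the new one is the old one squared); in any case what I need is that $v$ has old-norm $1$.

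The key point is then a uniform bound $\int_W v^{2n}\dx \le C_0\, n!\, A^n$ for all $n\ge 1$, with constants depending only on $\lambda$ (through $q$ and $K$). This follows by comparing term by term with the series for $e^{qv^2}$: from $\int_W e^{qv^2}\dx\le K$ we get $\frac{q^n}{n!}\int_W v^{2n}\dx \le K$, i.e. $\int_W v^{2n}\dx\le K\, n!\, q^{-n}$. Substituting back,
\be
\int_W(e^{qu^2}-1)\dx \le \sum_{n\ge 1}\frac{(q\|u\|_W^2)^n}{n!}\cdot K\,n!\,q^{-n} = K\sum_{n\ge 1}\|u\|_W^{2n} = K\,\frac{\|u\|_W^2}{1-\|u\|_W^2},
\ee
which is exactly \eqref{TMS+} with $C=K$. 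The geometric series converges precisely because $\|u\|_W^2<1$.

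The main obstacle — really the only delicate point — is the bookkeeping between the two norms defined in \eqref{norm} and \eqref{norm+} and making sure the scaling is legitimate: I must check that normalizing $u$ by its (old) norm lands in the admissible set of \eqref{TMS}, and that the hypothesis $\|u\|_W<1$ in Lemma~\ref{lem:TMS+} is the correct one to make the geometric series sum. Once the notational identification is pinned down, the estimate is a clean monotone rearrangement of absolutely convergent positive series, with no analytic subtlety; Tonelli's theorem justifies interchanging the sum and the integral throughout since all terms are nonnegative. Finally, one records that $C$ depends on $\lambda$ only through the constant $q$ of Lemma~\ref{lem:TMS} and the corresponding supremum $K$, both of which are determined by $\lambda$ and the fixed radius $\tfrac12$.
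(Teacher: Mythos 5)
Your proof is correct and follows the paper's argument essentially verbatim: expand $e^{qu^2}-1$ in its power series, bound each $\int_W u^{2n}\,dx$ by $K\,n!\,q^{-n}\|u\|_W^{2n}$ via Lemma~\ref{lem:TMS} applied to the normalized function $u/\|u\|_W$, and sum the resulting geometric series using $\|u\|_W<1$. One small caveat on your aside about the two norms: the expression in \eqref{norm+} is not literally ``\eqref{norm} squared,'' since $\int_W(\cdot)^2\,dx$ is not $\bigl(\int_W(\cdot)\,dx\bigr)^2$; the exponent $2$ in \eqref{norm+} is simply a misprint and the intended norm is that of \eqref{norm}, which is in any case the one you actually use, so your argument is unaffected.
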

\begin{proof}
Form \eqref{TMS} we have
\be
\label{pe+}
\frac{(q)^n}{n!}\int_W(u/\|u\|_W)^{2n}dx\le C,\;  u\in H^1(W)\setminus\{0\}\,n\in\N,
\ee
and thus
\be
\label{pe+1}
\frac{(q)^n}{n!}\int_Wu^{2n}dx\le C\|u\|_W^{2n},\;  n\in\N.
\ee
Adding the inequalities \eqref{pe+1} over $n\in\N$ and taking into account the assumption $\|u\|_W<1$, we obtain
\eqref{TMS+}.
\end{proof}

\begin{theorem}
 \label{thm:HTM+} The following relation holds true:
\be
\label{HTM+}
\sup_{u\in\dot H^1(\mathbb H^2),\|u\|\le 1}\int_{\mathbb H^2} (e^{4\pi u^2}-1)d\mu<\infty.
\ee
\end{theorem}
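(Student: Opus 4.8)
The strategy is to cover the hyperbolic plane $\mathbb{H}^2$ by a family of Euclidean disks of radius $\frac12$ whose preimages under Möbius transformations cover the Poincaré disk $B$ with bounded overlap, and then sum the local estimate from Lemma~\ref{lem:TMS+} over this family. Concretely, I would fix a countable set of points $\zeta_j\in B$ such that the images $\eta_{\zeta_j}^{-1}(W_0)$ of a fixed disk $W_0$ of radius $\frac12$ (say centered at the origin) cover $B$, with the key property that every point of $B$ lies in at most $N_0$ of these sets, $N_0$ independent of everything. This is possible because the Möbius transformations act by isometries on $\mathbb{H}^2$, so one is really choosing an $r$-net in the complete homogeneous manifold $\mathbb{H}^2$, and a net at a fixed hyperbolic scale gives a cover by fixed-radius hyperbolic balls with bounded multiplicity; pulling these hyperbolic balls back to Euclidean coordinates, each sits inside some Euclidean disk of radius $\frac12$ (after possibly shrinking the net constant), and the Möbius invariance of the picture makes the multiplicity uniform.

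Given such a cover, let $u\in\dot H^1(\mathbb{H}^2)$ with $\|u\|\le 1$, i.e. $\|\nabla u\|_{L^2(B)}\le 1$. For each $j$, set $v_j\eqdef u\circ\eta_{\zeta_j}^{-1}$ restricted to $W_0$; by conformal invariance $\|\nabla v_j\|_{L^2(W_0)}=\|\nabla u\|_{L^2(\eta_{\zeta_j}^{-1}(W_0))}$. One then needs the full $H^1(W_0)$-norm (gradient plus $L^2$) to be controlled, so I would invoke the Poincaré inequality on $W_0$ against, say, the mean of $v_j$, or alternatively work with $v_j$ minus a constant — here a little care is needed because $\dot H^1(\mathbb{H}^2)$ functions need not be in $L^2$ globally, but on each fixed disk $W_0$ the Poincaré inequality $\|v_j-\bar v_j\|_{L^2(W_0)}\le C\|\nabla v_j\|_{L^2(W_0)}$ is available. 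Applying Lemma~\ref{lem:TMS+} to (a rescaled version of) $v_j$ and using that $e^{4\pi u^2}-1$ transforms with the invariant measure $d\mu=\frac{dx}{(1-|x|^2)^2}$, I get
\[
\int_{\eta_{\zeta_j}^{-1}(W_0)}(e^{4\pi u^2}-1)\,d\mu \;=\;\int_{W_0}(e^{4\pi v_j^2}-1)\,d\mu_0\;\le\; C\,\frac{\|v_j\|_{W_0}^2}{1-\|v_j\|_{W_0}^2},
\]
where $\|v_j\|_{W_0}^2$ is, up to the Poincaré correction and the constant-shift adjustment, essentially $\int_{\eta_{\zeta_j}^{-1}(W_0)}|\nabla u|^2\,dx$ plus lower-order terms. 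Summing over $j$ and using the bounded-overlap property, $\sum_j\int_{\eta_{\zeta_j}^{-1}(W_0)}|\nabla u|^2\,dx\le N_0\|\nabla u\|_{L^2(B)}^2\le N_0$, so the right-hand side is bounded by a universal constant, giving \eqref{HTM+}.

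The main obstacle is the passage from $\|\nabla u\|_2\le1$ globally to the hypothesis $\|v_j\|_{W_0}<1$ of Lemma~\ref{lem:TMS+} for \emph{each} $j$ simultaneously. The quantities $\|v_j\|_{W_0}$ involve both the local Dirichlet energy (which sums to at most $N_0$) and an additive Poincaré/constant-shift term that need not be small. The resolution is that the function $t\mapsto \frac{t}{1-t}$ is not summable near $t=1$, so one cannot afford any single $\|v_j\|_{W_0}$ to be close to $1$; instead I would split the sum into the finitely many (bounded in number, by the energy budget) indices where the local energy exceeds a small threshold $\varepsilon$, handling those by the crude bound $\int_{\eta_{\zeta_j}^{-1}(W_0)}(e^{4\pi u^2}-1)\,d\mu\le$ (the global Mancini–Sandeep-type bound, or directly the original Trudinger–Moser inequality on a slightly larger Euclidean ball containing $\eta_{\zeta_j}^{-1}(W_0)$, which is relatively compact in $B$), and, for the remaining indices, using that small local energy forces $\|v_j\|_{W_0}^2\le\frac12$ say, where $\frac{t}{1-t}\le 2t$, so that $\sum_j\int(e^{4\pi u^2}-1)\,d\mu\le 2C\sum_j\|v_j\|_{W_0}^2\le 2CN_0$ plus the finite bad-index contribution. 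Assembling these pieces yields the uniform bound. The Euclidean-versus-hyperbolic bookkeeping in comparing the two metrics on $W_0$ is routine once one fixes a reference disk $W_0\Subset B$ and notes that on it the conformal factor $(1-|x|^2)^{-2}$ is bounded above and below.
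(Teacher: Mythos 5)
Your overall architecture is the same as the paper's: cover the Poincar\'e disk by M\"obius translates $\eta_\zeta(W)$ of a fixed small disk with bounded multiplicity, apply Lemma~\ref{lem:TMS+} on each patch, handle separately the (finitely many) patches where the local norm is not small, and use the linearization $\tfrac{t}{1-t}\le 2t$ on the others. The gap is in the step where you pass from the local Dirichlet energy to the full local $H^1(W_0)$-norm required by Lemma~\ref{lem:TMS+}. You propose the Poincar\'e inequality $\|v_j-\bar v_j\|_{L^2(W_0)}\le C\|\nabla v_j\|_{L^2(W_0)}$, but this controls $v_j-\bar v_j$, not $v_j$ itself, and the nonlinearity $e^{4\pi u^2}-1$ is not invariant under constant shifts; there is no ``constant-shift adjustment'' that lets you discard $\bar v_j$. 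Concretely, your claim that ``small local energy forces $\|v_j\|_{W_0}^2\le\tfrac12$'' is unjustified: a function with tiny Dirichlet energy on $W_0$ but large mean has large $\|v_j\|_{W_0}$, and for such a function $\int_{W_0}(e^{4\pi v_j^2}-1)\,d\mu$ is genuinely large. Nothing in your argument rules this out, so the split into good/bad indices does not close.

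The paper avoids this by taking the local norm to be the $\mu$-weighted one, $\|v\|_W^2=\int_W\bigl(|\nabla v|^2+\tfrac{v^2}{(1-|x|^2)^2}\bigr)\dx$. Under M\"obius shifts this transforms exactly, $\|u\circ\eta_\zeta\|_W^2=\int_{\eta_\zeta W}|\nabla u|^2\dx+\int_{\eta_\zeta W}u^2\,d\mu$, and summing over the cover (multiplicity $\le M$) gives $\sum_\zeta\|u\circ\eta_\zeta\|_W^2\le M\bigl(\|\nabla u\|_2^2+\int_B u^2\,d\mu\bigr)$. The second term is then controlled by the hyperbolic Hardy inequality \eqref{lambda1}, $\int_B u^2\,d\mu\le 4\int_B|\nabla u|^2\dx\le 4$, so the sum of local norms is at most $5M$. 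That global a priori bound is what both limits the number of bad indices and forces the good ones to have $\|u\circ\eta_\zeta\|_W^2<q/8\pi$. Replacing your Poincar\'e step by the Hardy inequality \eqref{lambda1}, and using the $\mu$-weighted local norm rather than the Euclidean one so that it sums cleanly over M\"obius translates, would repair the argument and reproduce the paper's proof.
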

\begin{proof}
Consider $\mathbb H^2$ as the Poincar\'e disk $B$.
Let  $W\subset B$ be an open disk or radius $\frac12$ and set the following equivalent Sobolev norm on $W$
\be
\|u\|_{W}^2\eqdef=\int_{W}\left(|\nabla u|^2+\frac{u^2}{(1-|x|^2)^2}\right)dx.
\ee
By  Corollary~\ref{cor:cover} there is a countable set $Z\subset B$ and the number $M\in\N$ be such that the sets $\eta_\zeta(W)$, $\zeta\in Z$, cover $B$ with multiplicity not exceeding $M$.
Let us fix a function $u\in\dot H^1(\mathbb H^2)$ such that $\|u\|_{\dot H^1}\le 1$ and define
\be
Z_u\eqdef\{\zeta\in Z: \|u\circ\eta_\zeta\|_W^2\ge\frac{q}{8\pi}\},
\ee
where $q$ is as in Lemma~\ref{lem:TMS}.
It is easy to see that $Z_u$ contains at most $40\pi M/q$ elements.
Indeed, since the multiplicity of the covering of $B$ by $\eta_\zeta W$, $\zeta\in Z$, is not greater than $M$,  taking into account that the $\dot H^1(\mathbb H^2)$-norm in the Poincar\'e disk model is realized by $\|\nabla u\|_2$, and applying the Hardy's inequality \eqref{lambda1}, we have
$$
\frac{q}{8\pi}(\#Z_u)\le \sum_{\zeta\in Z_u}\|u\circ\eta_\zeta\|_{W}^2\le M \left(\|u\||_{\dot H^1}^2+\int_Bu^2d\mu \right)\le  5 M\|u\||_{\dot H^1}^2\le 5M.
$$
From Lemma~\ref{lem:TMS+}, applied to $\sqrt{\frac{4\pi}{q}}u$  we have, for every $\zeta\in Z\setminus Z_u$,
\be
\label{TMS+a}
\int_{\eta_\zeta W}(e^{4\pi u^2}-1)d\mu\le C\frac{\|u\circ\eta_\zeta\|_{W}^2}{\frac{q}{4\pi}-\|u\circ\eta_\zeta\|_{W}^2}\le \frac{8\pi}{q}C\|u\circ\eta_\zeta\|_{W}^2.
\ee
Adding the inequalities over $\zeta\in Z\setminus Z_u$ we obtain, using again the Hardy's inequality,
\be
\label{TMS+b}
\int_{\bigcup_{\zeta\in Z_u}\eta_\zeta W}(e^{4\pi u^2}-1)d\mu \le C\int_{B}|\nabla u|^2dx+C\int_{B}u^2d\mu\le C\|u\||_{\dot H^1}^2\le C.
\ee
On the other hand, from the usual Trudinger-Moser inequality for $u\circ\eta_\zeta$, with any $ \zeta \in Z_u$, we have
\be
\label{TMS+c}
\int_{\eta_\zeta W}(e^{4\pi u^2}-1)d\mu=
\int_{W}e^{4\pi (u\circ\eta_\zeta)^2}d\mu
\le C\int_{W}e^{4\pi (u\circ\eta_\zeta)^2}dx
\le \int_{B}e^{4\pi (u\circ\eta_\zeta)^2}dx\le C.
\ee
Adding (at most $40\pi M/q+1$) inequalities  \eqref{TMS+b}  and \eqref{TMS+c}, we obtain \eqref{HTM+}.
\end{proof}

{\em Proof of Theorem~\ref{WTM}.} Rewrite \eqref{HTM+} in coordinate form for the Poincar\'e disk. \hfill  $\Box$

\begin{remark}
\label{best4pi} The constant $4\pi$ in \eqref{HTM+} as well as in \eqref{WTM} cannot be replaced by any number $p>4\pi$. Indeed, the integrals in both relations are bounded from below by $\int_B e^{pu^2}dx$, from the Trudinger-Moser inequality, for which the parameter $4\pi$ is optimal.
\end{remark}

\section{Existence of minimizers}
We begin with the proof of the first statement of Subsection~1.3.

{\em Proof Theorem~\ref{thm:coco}.}
 Let us fix $p<4\pi$. Let  $u_k\in H_0^1(B)$, $\|\nabla u_k\|_2=1$, and assume that
$u_k\circ\eta_{\zeta_k}\rightharpoonup 0$  for every sequence $\zeta_k\in B$.
By \eqref{HTM+}, there is a constant $C>0$ such that for all $n=0,1,\dots$,
\be
\label{pe4aa}
\frac{p^n}{n!}\int_{B}u_k^{2n}d\mu\le C \left(\frac{p}{4\pi}\right)^n.
\ee
Then, for every $m\in\N$ and for all $k\in\N$,
\be
\label{pe4a}
\sum_{n\ge m} \frac{p^n}{n!}\int_{B}u_k^{2n}d\mu\le C \left(\frac{p}{4\pi}\right)^m.
\ee
Furthermore, it is easy to see that there exists $\lambda\in(p/4\pi,1)$ such that
\be
\label{pe4ab}
\sum_{n\ge m} \frac{p^n}{n!}\int_{B}u_k^{2n+r}d\mu\le C \lambda^m.
\ee
By Lemma~9.4 of \cite{ccbook}, for every $n=0,\dots,m-1$,
\be
\label{lowN}
\frac{p^n}{n!}\int_{B}u_k^{2n+r}d\mu\to 0.
\ee
Combining \eqref{lowN} with \eqref{pe4ab}, we obtain

\be
\label{pe4b}
\limsup_{k\to\infty}\int_BF(u_k)d\mu\le C\limsup_{k\to\infty}\int_B|u_k|^re^{pu_k^2}d\mu
\le \epsilon,
\ee
and since $\epsilon$ is an arbitrary positive number, \eqref{cocodmu} follows.
\hfill$\Box$

We will need the following version of Brezis-Lieb lemma in presence of a $H_0^1$-bound on a sequence.
\begin{lemma}
\label{BL}
Let $F\in C^1(\R)$ satisfy $|F(s)|\le Cs^2e^{p s^2}$ with some  $C>0$ and $p<4\pi$, and assume that $u_k\in H_0^1(B)$, $u_k\rightharpoonup u$, $\|\nabla u_k\|_2\le 1$. Then
\be
\label{eq:BL}
\int_B(F(u_k)-F(u_k-u)-F(u))d\mu\to 0.
\ee
\end{lemma}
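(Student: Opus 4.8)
The plan is to follow the standard Brezis–Lieb strategy, localizing the a.e. pointwise argument and controlling the tails by an equiintegrability bound coming from the invariant Trudinger–Moser inequality \eqref{HTM+}. First I would pass to a subsequence so that $u_k\to u$ a.e.\ on $B$ and $\nabla u_k\rightharpoonup\nabla u$ in $L^2$; set $v_k\eqdef u_k-u$, so $v_k\to 0$ a.e., $\|\nabla v_k\|_2$ is bounded, and $u_k=v_k+u$. The integrand $G_k\eqdef F(u_k)-F(v_k)-F(u)$ converges to $0$ a.e.\ (continuity of $F$), so by Vitali's convergence theorem it suffices to show that the family $\{G_k\}$ is uniformly integrable with respect to the measure $d\mu=(1-|x|^2)^{-2}\dx$. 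Since $|F(s)|\le C s^2 e^{ps^2}$, it is enough to prove that $\{u_k^2 e^{pu_k^2}\}$ and $\{v_k^2 e^{pv_k^2}\}$ (and the fixed function $u^2 e^{pu^2}$, which lies in $L^1(d\mu)$ by \eqref{HTM+}) are uniformly $\mu$-integrable.

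The key analytic input is that for $p<4\pi$ and a sequence bounded by $1$ in the gradient norm, the quantities $u_k^2 e^{pu_k^2}$ are bounded in some $L^q(d\mu)$ with $q>1$, which gives equiintegrability automatically. Concretely, choose $q>1$ with $pq<4\pi$; then $\int_B (u_k^2 e^{pu_k^2})^q\,d\mu\le \int_B u_k^{2q} e^{pq\,u_k^2}\,d\mu$, and since $\|\nabla u_k\|_2\le 1$ the exponential factor is controlled by \eqref{HTM+} applied with the admissible exponent $pq<4\pi$, while the polynomial prefactor $u_k^{2q}$ is absorbed exactly as in Lemma~\ref{lem:TMS+}: expanding $e^{pqu_k^2}=\sum \frac{(pq)^n}{n!}u_k^{2n}$ and using the power-series bound $\frac{(4\pi)^n}{n!}\int_B u_k^{2n}\,d\mu\le C$ (a consequence of \eqref{HTM+}), the series $\sum_n \frac{(pq)^n}{n!}\int_B u_k^{2n+2q}\,d\mu$ converges geometrically in $n$ with ratio $pq/4\pi<1$, uniformly in $k$ (the finitely many low-order terms being bounded since $u_k^{2n+2q}\in L^1(d\mu)$). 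The same estimate applies verbatim to $v_k$, since $\|\nabla v_k\|_2$ is bounded and one may rescale to norm $1$. Thus $\{G_k\}$ is bounded in $L^q(d\mu)$, $q>1$, hence uniformly $\mu$-integrable, and Vitali gives \eqref{eq:BL}.

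I expect the main obstacle to be the bookkeeping in the uniform-integrability step: one must be careful that the $L^q(d\mu)$-bound is genuinely uniform in $k$, which requires the power-series argument of Lemma~\ref{lem:TMS+} rather than a naive application of \eqref{HTM+} to each $u_k$, and one must handle the finitely many low-degree terms $\int_B u_k^{2n+2q}\,d\mu$ (for $2n+2q$ not an even integer) — these are finite by interpolation between an $L^1(d\mu)$ bound and the exponential bound, uniformly in $k$. A minor point is the mismatch between the hypothesis $\|\nabla u_k\|_2\le 1$ (not $=1$): for $u$ this is harmless since $\|\nabla u\|_2\le\liminf\|\nabla u_k\|_2\le 1$, and for $v_k$ one simply works with $v_k/\|\nabla v_k\|_2$ when $\nabla v_k\ne 0$, the bound $\|\nabla v_k\|_2\le 2$ being all that is needed. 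Everything else is the routine Brezis–Lieb pointwise cancellation together with Vitali's theorem.
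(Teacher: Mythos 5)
The proposal takes a genuinely different route from the paper, but it has a gap that I don't think can be closed in the way you set it up. The paper does not invoke Vitali; it splits $F=G_M+F_M$ into a bounded (quadratic-growth) truncation $G_M$ and an exponential tail $F_M=F\mathbf 1_{\{|s|>M\}}$, applies the Brezis--Lieb cancellation (via dominated convergence, with the $u^2$-type dominator controlled by Hardy's inequality \eqref{lambda1}) only to $G_M$, and estimates the three tail integrals $\int F_M(u_k)\,d\mu$, $\int F_M(u_k-u)\,d\mu$, $\int F_M(u)\,d\mu$ \emph{separately} by $Ce^{-(q-p)M^2}$ using \eqref{WTM}, with no cancellation, then lets $M\to\infty$.

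The gap in your argument is the measure: $\mu(B)=\int_B(1-|x|^2)^{-2}\dx=\infty$, so Vitali's theorem on $(B,\mu)$ requires \emph{tightness} (for every $\varepsilon>0$ there is a set $E$ of finite $\mu$-measure with $\sup_k\int_{B\setminus E}|G_k|\,d\mu<\varepsilon$) in addition to the small-sets condition. The $L^q(d\mu)$ bound, $q>1$, that you establish via the power-series argument gives the small-sets condition (and the related $\sup_k\int_{\{|G_k|>R\}}|G_k|\,d\mu\to0$ as $R\to\infty$) but not tightness; the standard counterexample $\mathbf 1_{[k,k+1]}$ on $\R$ shows that $L^q$-boundedness plus a.e.\ convergence to $0$ does not force $L^1$-convergence on an infinite measure space. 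Worse, tightness actually \emph{fails} for your dominating family: the middle term $v_k^2e^{pv_k^2}$ is not tight, since one may take $v_k=w\circ\eta_{\zeta_k}$ with $\zeta_k\to\partial B$, for which $\int_B v_k^2e^{pv_k^2}\,d\mu$ is a fixed positive constant while the support escapes to $\partial B$ — precisely the noncompactness mechanism the paper is built around, and the reason $\int_B F(v_k)\,d\mu\not\to0$ in general (cf.\ Theorem~\ref{thm:coco}, which needs the stronger hypothesis \eqref{cw}). Tightness holds for $\{G_k\}$ only because of the Brezis--Lieb cancellation $F(v_k+u)-F(v_k)-F(u)\approx0$ near $\partial B$ where $u$ is negligible; the step where you dominate $|G_k|$ by $C(u_k^2e^{pu_k^2}+v_k^2e^{pv_k^2}+u^2e^{pu^2})$ throws this cancellation away, and with it any hope of proving tightness. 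The paper's truncation sidesteps this: cancellation is exploited in the regime where $F$ has only quadratic growth and Hardy supplies an integrable dominator, while in the exponential regime one settles for a uniformly small triangle-inequality bound rather than cancellation, so no tightness of a non-tight family is ever required.
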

\begin{proof} The notation of norm in this proof refers to the gradient norm $\|\nabla u\|_2$ on $B$.
Note that $u_k\to u$ almost everywhere in $B$ and that $\limsup\|u_k-u\|^2=\limsup\|u_k\|^2-\|u\|^2\le 1$.
Let $M>0$ and define
$$
F_M(s)\eqdef\left\lbrace\begin{array}{cc}
              F(s) & \text{ for }|s|>M, \\
              0     & \text{ for } |s|\le M.
            \end{array}\right.
$$

Set $G_M=F-F_M$. Then $G_M$ is a bounded function and therefore, by Lebesgue convergence theorem,
\be
\label{GM}
\int_B(G_M(u_k)-G_M(u_k-u)-G_M(u))d\mu\to 0.
\ee
Fix two numbers $q,r$ such that $p<q<r<4\pi$ and note  that $|F_M(s)|\le s^2e^{q s^2}e^{-(q-p)M^2}\le
C (e^{r s^2}-1)e^{-(q-p)M^2}$.
Then
\be
\left|\int F_M(u_k)d\mu\right|\le  C e^{-(q-p)M^2}\int_B(e^{r u_k^2}-1)d\mu\le C e^{-(q-p)M^2}
\ee
by \eqref{WTM}, with analogous estimates when $u_k$ is replaced by $u$, resp. $u-u_k$.
In the latter case \eqref{WTM} is applied to $u-u_k$ if  $u_k\to u$ in norm, and to  $(u-u_k)/\|u-u_k\|$ otherwise.
From here and \eqref{GM} we conclude that
$$
\limsup\left|\int_B(F(u_k)-F(u_k-u)-F(u))d\mu\right|\le  C e^{-(q-p)M^2}.
$$
Since the number $M$ is arbitrary,
\eqref{eq:BL} follows.
\end{proof}

{\em Proof of Theorem~\ref{minimizers}.}
 For the length of this proof the notation of norm, unless otherwise specified, refers to the gradient norm $\|\nabla \cdot\|_2$.
Let $u_k\in H_0^1(B)$ be such that $\|u_k\|\to 1$ and $\int_B F(u)d\mu\to M_1$.
Consider the following family of problems that extends \eqref{maxprob}:
\be
\label{maxprob-t}
M_t\eqdef\sup_{u\in H_0^1(B), \|u\|^2=t}\int_B F(u)d\mu,\;t\in[0,1].
\ee
If  $u_k$ is a maximizing sequence then so is $u_k\circ\eta_{\zeta_k}$ for any  sequence $\zeta_k\in B$. If $u_k\circ\eta_{\zeta_k}\rightharpoonup 0$ for any sequence $\zeta_k$, then by Theorem~\ref{thm:coco} we have $\int_B F(u_k)d\mu\to 0$, a contradiction since $\sup F>0$ implies $M_1>0$. Thus we choose a sequence $\zeta_k\in B$ such that   $u_k\circ\eta_{\zeta_k}\rightharpoonup u\neq 0$.

By the standard scalar product calculations we have
\be
\label{energy}
\|u\|^2+\|u_k-u\|^2=1,
\ee
while by Lemma~\ref{BL},
\be
\label{mass}
\int_B F(u)d\mu+\int_B F(u_k-u) d\mu\to M_1.
\ee
Let $t=\|u\|$. Then from $\eqref{mass}$ follows
\be
\label{mass2}
M_t+M_{1-t}\ge M_1.
\ee
An elementary argument using the well-known property of the gradient norm,
$$\|v_0\|\le 1,\;\|v_1\|\le 1,\; v_t=\sqrt{tv_1^2+(1-t)v_0^2} \Rightarrow \|v_t\|\le 1,\; t\in(0,1),$$
shows that \eqref{mass2} contradicts \eqref{convexity} unless  $t=1$ or $t=0$.
The latter case has been, however, ruled out. Consequently, $\|u\|=1$, $\int_B F(u)d\mu=M$, and $u_k\to u$
in $H_0^1(B)$. The theorem is proved.\hfill$\Box$

The notation $\cw 0$ in the theorem below is a shorthand for the convergence in the sense of \eqref{cw}, which by Theorem~\ref{thm:coco} implies convergence in the sense of
\eqref{cocodmu} and, in particular,  convergence in $L^p(B,d\mu)$ for any $p\in[1,\infty)$.

\begin{theorem}
\label{abstractcc}
Let $u_{k}  \in H_0^1(B)$ be a  bounded sequence. Then there exists
$w^{(n)}  \in H$, $\zeta_{k} ^{(n)}  \in B$, $k,n \in \N$,  such that  for
a renumbered subsequence
\begin{gather}
\label{separates}
\zeta_k^{(1)}=0,\; \eta_{\zeta_{k} ^{(n)}} (\zeta_{k} ^{(m)})\to \partial B
\mbox{ for } n \neq m,
\\
w^{(n)}=\wlim  u_k\circ\eta_{\zeta_{k} ^{(n)}}^{-1}
\\
\label{norms}
\sum_{n \in {\N}} \|\nabla w ^{(n)}\|_2^2 \le \limsup \|\nabla u_k\|_2^2
\\
\label{BBasymptotics}
u_{k}  - \sum_{n\in{\N}}  w^{(n)}\circ\eta_{\zeta_{k} ^{(n)}} \cw 0.
\end{gather}
\end{theorem}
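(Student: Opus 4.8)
The plan is to run the standard iterative "bubble extraction" (profile decomposition) argument adapted to the group of Möbius shifts $\eta_\zeta$ acting unitarily on $H_0^1(B)$, exactly in the spirit of Struwe's global compactness and the abstract scheme in \cite{ccbook}. First I would set up the iteration: given the bounded sequence $u_k$, put $\zeta_k^{(1)}=0$ and $w^{(1)}=\mathrm{w}\text{-}\lim u_k$ (passing to a subsequence); subtract it and consider the remainder $v_k^{(1)}\eqdef u_k - w^{(1)}\circ\eta_{\zeta_k^{(1)}}$. Inductively, having extracted $w^{(1)},\dots,w^{(n)}$ and the centers $\zeta_k^{(1)},\dots,\zeta_k^{(n)}$ with remainder $v_k^{(n)}$, I would look at the "defect" quantity
\[
\sigma_n \eqdef \limsup_{k\to\infty}\ \sup_{\zeta\in B}\ \bigl\|\,(v_k^{(n)}\circ\eta_\zeta)\ \chi_{W}\,\bigr\|
\]
measured against test profiles; concretely, if for every choice of $\zeta_k\in B$ one has $v_k^{(n)}\circ\eta_{\zeta_k}\rightharpoonup 0$, the process stops and \eqref{BBasymptotics} holds with the $n$ bubbles already found. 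Otherwise there is a sequence $\zeta_k^{(n+1)}$ and a nonzero weak limit $w^{(n+1)}\eqdef \mathrm{w}\text{-}\lim v_k^{(n)}\circ\eta_{\zeta_k^{(n+1)}}$, which becomes the next bubble; then set $v_k^{(n+1)}\eqdef v_k^{(n)} - w^{(n+1)}\circ\eta_{\zeta_k^{(n+1)}}$.

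The key algebraic input is the asymptotic orthogonality of the pieces in the gradient norm. Because $u\mapsto u\circ\eta_\zeta$ is unitary on $H_0^1(B)$, at each step one has the Hilbert-space Pythagoras relation
\[
\|\nabla v_k^{(n)}\|_2^2 = \|\nabla v_k^{(n+1)}\|_2^2 + \|\nabla w^{(n+1)}\|_2^2 + o(1),
\]
provided the cross terms vanish; telescoping gives $\sum_{j\le n}\|\nabla w^{(j)}\|_2^2 \le \limsup\|\nabla u_k\|_2^2$, which is \eqref{norms} in the limit $n\to\infty$ and simultaneously forces $\|\nabla w^{(n)}\|_2\to 0$, so the iteration either terminates or the bubbles shrink. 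The vanishing of the cross terms is where the divergence condition \eqref{separates}, $\eta_{\zeta_k^{(n)}}(\zeta_k^{(m)})\to\partial B$ for $n\neq m$, enters: one must check that if two center sequences did \emph{not} diverge in this hyperbolic sense, then the composition $\eta_{\zeta_k^{(n)}}^{-1}\circ\eta_{\zeta_k^{(m)}}$ would (along a subsequence) converge to a Möbius transformation, and the $m$-th bubble could be absorbed into the $n$-th weak limit — contradicting that $w^{(m)}$ was extracted \emph{after} subtracting the contribution around $\zeta^{(n)}$. This dichotomy — nearby centers collapse, distant centers decouple — is the standard mechanism; here it should be read off from the group structure of the disk automorphisms and the properties recorded in the Appendix.

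The main obstacle I anticipate is making the cross-term estimates and the "absorption" step rigorous for Möbius shifts rather than Euclidean translation–dilations: one needs that $v_k^{(n)}\circ\eta_{\zeta_k}\rightharpoonup 0$ for every sequence $\zeta_k$ is genuinely equivalent, via Theorem~\ref{thm:coco}, to the smallness of $\int_B F(v_k^{(n)})\,d\mu$, and that this "$\cw 0$" property is preserved under subtracting a bubble concentrated at a hyperbolically distant center — i.e. that the decomposition does not destroy the flatness already achieved. A related technical point is termination: since $\sum\|\nabla w^{(n)}\|_2^2$ converges, for large $n$ the remainders have small gradient norm, and one invokes the subcritical Trudinger–Moser-type bound \eqref{WTM} together with Lemma~\ref{BL} (Brezis–Lieb) to conclude that the tail $\sum_{n>N}w^{(n)}\circ\eta_{\zeta_k^{(n)}}$ is negligible in the sense of \eqref{BBasymptotics}; passing to a diagonal subsequence over $n$ and $k$ then yields the stated decomposition. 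I would structure the write-up as: (i) one extraction step with the orthogonality identity; (ii) the separation lemma for centers; (iii) the energy bound and termination; (iv) diagonalization and verification of \eqref{separates}–\eqref{BBasymptotics}.
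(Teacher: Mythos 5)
The paper's ``proof'' of Theorem~\ref{abstractcc} is actually a one-line citation: it invokes the abstract profile-decomposition theorem (Theorem~3.1 of \cite{ccbook}) for a Hilbert space equipped with a group of unitary ``dislocations,'' and points out that the relevant hypotheses have already been verified for isometry groups of cocompact manifolds, including $\mathbb H^2$, in Lemma~2.9 of \cite{FiesTin}; relation~\eqref{separates} comes from Remark~9.1(a) of \cite{ccbook}. Your proposal, by contrast, sketches the inner workings of that abstract theorem: iterative extraction of weak limits along Möbius-shifted subsequences, Pythagoras-type energy orthogonality from unitarity, separation of centers to kill cross terms, termination via summability, and a diagonal subsequence. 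That is the right mechanism, and it is precisely what the cited theorem packages; so the mathematics is the same, but you are re-deriving the general principle where the paper simply cites it. Your route buys self-containedness and explicitness about where the group structure enters; the paper's route buys brevity and avoids re-verifying dislocation-space axioms.

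Two small imprecisions in your sketch are worth flagging. First, the termination step should not appeal to the Trudinger--Moser bound~\eqref{WTM} or to the Brezis--Lieb Lemma~\ref{BL}: the conclusion~\eqref{BBasymptotics} is the linear statement $u_k-\sum_n w^{(n)}\circ\eta_{\zeta_k^{(n)}}\cw 0$, i.e.\ weak convergence to zero along every shifted subsequence, and this is a purely functional-analytic consequence of $\sum_n\|\nabla w^{(n)}\|_2^2<\infty$ together with the separation~\eqref{separates} and the dislocation axioms; no nonlinear functional is involved at this level (the nonlinear consequence $\int_B F(\cdot)\,d\mu\to 0$ is downstream, via Theorem~\ref{thm:coco}). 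Second, the ``absorption'' step needs the precise dichotomy that if $\eta_{\zeta_k^{(n)}}(\zeta_k^{(m)})$ does \emph{not} escape to $\partial B$ then (after a subsequence) $\eta_{\zeta_k^{(n)}}^{-1}\circ\eta_{\zeta_k^{(m)}}$ converges to a fixed Möbius map \emph{strongly on $H_0^1(B)$}, which is a compactness property of the automorphism group of the disk; this is exactly the kind of condition verified in \cite{FiesTin}, and stating it explicitly would close the gap between your sketch and a complete argument.
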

\begin{proof}
This theorem is an application of Theorem~3.1 in \cite{ccbook} to sequences in
$\dot H^1(\mathbb H^2)$ equipped with the M\"obius shifts. Conditions of  that theorem have been verified for the case of actions of isometries on cocompact (or grid-periodic) manifolds, which includes hyperbolic spaces, in Lemma~2.9, \cite{FiesTin}. Relation~\eqref{separates} is based on Remark~9.1 (a) of \cite{ccbook}
\end{proof}

\section{Appendix}
We summarize here some known definitions and facts concerning the Poincar\'e disk model of hyperbolic space.
For reference see \cite{Lang} and  \cite{Ratcliffe}.
Poincar\'e disk is a coordinate representation of the hyperbolic space $\mathbb H^2$, consisting of the unit disk $B\subset\R^2$ equipped with the metric $g_{i,j}=\frac{1}{(1-|x|^2)^2}\delta_{i,j}$, $i,j=1,2$.
The Riemannian measure $\mu$ on $\mathbb H^2$ is given in the Poincar\'e disk model by $d\mu=\frac{dx}{(1-|x|^2)^2}$.
The quadratic form of Laplace-Beltrami operator on $\mathbb H^2$ in the Poincar\'e disk model evaluates as
$\int_B |\nabla u|^2 dx$.
The maps $\eta_\zeta: B\to B$, $\eta_\zeta(z)=\frac{z-\zeta}{1-\bar \zeta z}$, $\zeta\in B$, are conformal isomorphisms of B as well as isometries of the Poincar\'e disk. Consequently, $\int_B |\nabla u|^2 dx$ and $\int F(u)d\mu$ are preserved under transformations $u\mapsto u\circ\eta_\zeta$.
The following version of Hardy's inequality holds true for all $u\in H_0^1(B)$, or, in invariant notations,
$u\in \dot H^1(\mathbb H^2)$ (see \cite{AA} or \cite{ManciniSandeep}):
\be
\label{lambda1} \int_B |\nabla u|^2 dx =\int_{\mathbb H^2} |\nabla_{\mathbb H^2} u|^2 d\mu\ge \frac14\int_{\mathbb H^2}u^2 d\mu=\int_B u^2\frac{dx}{(1-|x|^2)^2}.
\ee

The following lemma is well known (for example, it is a trivial modification of Lemma~2.3 from \cite{FiesTin}).
We give the proof of it for the sake of completeness.

\begin{lemma}
 \label{cover} Let $U\Subset B$ be an open set, let
\be
\label{def:V}
V:=\bigcup_{\zeta\in B: \eta_\zeta U\cap U\neq\emptyset}\eta_\zeta U.
\ee
and let $W\Subset B$ be any open set, relatively compact in $B$, that contains $V$.
There exist a number $M\in\N$, and a countable set $Z\subset B$ such that the sets $\{\eta_\zeta W\}_{\zeta\in Z}$, cover $B$ with multiplicity not exceeding $M$ and the sets $\{\eta_\zeta U\}_{\zeta\in Z}$ are mutually disjoint.
\end{lemma}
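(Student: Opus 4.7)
My plan is to run the standard greedy packing argument on $B$ equipped with the Poincar\'e hyperbolic metric, adapted to the two-parameter family $\{\eta_\zeta\}_{\zeta\in B}$ of M\"obius shifts. Three ingredients are required: a maximal disjoint packing of shifts of $U$, the covering consequence of maximality (with $U$ replaced by $W$), and a multiplicity bound from a hyperbolic volume count. For the first, I would apply Zorn's lemma to the poset of subsets $Z'\subset B$ with $\{\eta_\zeta U\}_{\zeta\in Z'}$ pairwise disjoint---chains being stable under union---to extract a maximal $Z$. Countability of $Z$ is automatic: $B$ is second countable and each $\eta_\zeta U$ is open and nonempty, so pairwise disjointness injects $Z$ into $\Q^2\cap B$.

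For the covering, I would fix $y_0\in U$; given $x\in B$ the equation $\eta_{\zeta_0}(y_0)=x$ has the explicit solution $\zeta_0=(y_0(1-|x|^2)-x(1-|y_0|^2))/(1-|x|^2|y_0|^2)\in B$, so $x\in\eta_{\zeta_0}U$. If $\zeta_0\in Z$ then $U\subset W$ finishes; otherwise maximality supplies $\zeta'\in Z$ with $\eta_{\zeta_0}U\cap\eta_{\zeta'}U\neq\emptyset$, equivalently the Poincar\'e-disk isometry $g:=\eta_{\zeta'}^{-1}\circ\eta_{\zeta_0}$ satisfies $gU\cap U\neq\emptyset$. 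Then $gU$ lies in a fixed relatively compact hyperbolic neighbourhood of $U$, and once $W$ is arranged (still inside $B$) to contain this neighbourhood one has $gU\subset W$, hence $x=\eta_{\zeta'}(g(y_0))\in\eta_{\zeta'}W$. The multiplicity bound is a direct volume count: if $x\in\eta_\zeta W$, then the disjoint set $\eta_\zeta U$ lies in the hyperbolic ball $B_h(x,2\,\mathrm{diam}_h(W))$ of fixed hyperbolic volume $V_0$, so at most $M:=\lfloor V_0/|U|_h\rfloor$ indices $\zeta\in Z$ can contribute.

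The main obstacle will be the covering step, for the following reason. The family $\{\eta_\zeta\}_{\zeta\in B}$ is not a subgroup of the isometry group of $\mathbb{H}^2$---rotations about the origin are missing, and the explicit composition $\eta_\zeta\circ\eta_{\zeta'}$ is $\eta_{\zeta''}$ only up to a rotation factor. Consequently the maximality argument produces a general hyperbolic isometry $g$ rather than a pure shift $\eta_\mu$, so $gU$ need not lie in the set $V$ of the statement (which only assembles pure translates meeting $U$). I would resolve this either by enlarging $W$ slightly beyond $V$ so that it contains the full isometric hull $\bigcup\{hU:h\in\mathrm{Isom}(\mathbb{H}^2),\ hU\cap U\neq\emptyset\}$---still relatively compact in $B$, since its hyperbolic diameter is bounded by $3\,\mathrm{diam}_h(U)$---or by using the polar decomposition $g=\eta_\mu R_\theta$ to peel off and control the rotational factor. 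Once this bookkeeping is in place, the rest of the argument is routine.
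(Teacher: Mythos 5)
Your plan is essentially the argument the paper itself uses: a maximal (or greedy) packing by disjoint translates $\eta_\zeta U$, covering of $B$ by the corresponding translates of the larger set, and a multiplicity bound by a hyperbolic volume count (the paper's count, placing $W$ inside a geodesic ball $V_R(x_0)$ and comparing volumes of the disjoint sets $\eta_\zeta U$, is the same as yours). The only structural difference is the selection device: you take a Zorn-maximal disjoint family and get countability from second countability, while the paper first extracts, by paracompactness, a countable locally finite family $Z_0$ whose $U$-translates cover $B$ and then thins it by an inductive greedy choice, replacing each discarded $\eta_\zeta U$ by $\eta_{\zeta_{j_k}}V$ in the cover. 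Either device works equally well.

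The obstacle you flag at the covering step is genuine, and you should be aware that the paper's own proof does not resolve it either: the inductive step there tacitly uses the implication $\eta_{\zeta_1}U\cap\eta_{\zeta_2}U\neq\emptyset\Rightarrow\eta_{\zeta_1}U\subset\eta_{\zeta_2}V$, and since $\eta_{\zeta_2}^{-1}\circ\eta_{\zeta_1}$ is a M\"obius shift only up to a rotation, this needs $V$ to contain the full isometric hull $\bigcup\{hU:\ h \text{ an isometry of } \mathbb H^2,\ hU\cap U\neq\emptyset\}$ rather than the shift hull \eqref{def:V}. For a general $U$ the two genuinely differ (take $U$ a union of two small balls far apart: an elliptic rotation about the centre of one ball keeps $hU\cap U\neq\emptyset$ while throwing the other ball far outside $V$), so neither your second repair (peeling off the rotation) nor the literal hypothesis $W\supset V$ can close the covering step by this method; your first repair, proving the lemma with $V$ replaced by the isometric hull (equivalently, requiring $W$ to contain it), is the right fix, even though it is not verbatim the statement, since $W$ is given and only assumed to contain $V$. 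Crucially, this repaired version is exactly what the paper needs: in Corollary~\ref{cor:cover} one takes $U=V_\epsilon(x)$, and every isometric image of $U$ meeting $U$ is a ball $V_\epsilon(y)$ with $d(y,x)<2\epsilon$, hence the isometric hull lies in $V_{3\epsilon}(x)\Subset W$ (and, since $\zeta\mapsto\eta_\zeta(x)$ maps $B$ onto $B$, it in fact coincides there with the shift hull $V$). With that remark added, your argument is complete for every use made of Lemma~\ref{cover} in the paper, and is more carefully documented on this point than the paper's own proof.
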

\begin{proof} We show first that if $Z\subset B$ is a set such that the sets $\{\eta_\zeta U\}_{\zeta\in Z}$ are mutually disjoint, and the sets $\{\eta_\zeta W\}_{\zeta\in Z}$ cover $B$, then the latter collection has a uniformly finite multiplicity.
Let $V_r(x)$ denote a geodesic ball of radius $r>0$ centered at $x\in B$, and note that $\mu(V_r(x))$ is independent of $x\in B$. Let $R>0$ and $x_0\in B$ be such that  $W\subset V_R(x_0)$.
If $\eta_\zeta W$ intersects $\overline{V_r(x)}$, then
$\eta_\zeta W \subset V_{r+2R} (x)$; but since the sts  $\eta_\zeta U$, $\zeta\in Z$,  are disjoint,
and  $\eta_\zeta W\supset \eta_\zeta V\supset \eta_\zeta U$, $\zeta\in Z$,
that can be true for at most $\frac{\mu_{r+2R}}{\mu (U)}$ values of $\zeta \in Z$.
Therefore the number of $\zeta\in Z$ such that the set $\eta_\zeta W$ contains the point $x$ does not exceed $\frac{\mu_{r+2R}}{\mu (U)}$.

Now let us construct the set $Z$.

Since every Riemannian manifold is paracompact, and once we observe that $\eta_\zeta(\{0\})=-\zeta$, so that $\bigcup_{\zeta\in B}\eta_\zeta(0)=B$, there exists a subset
$Z_0 \subset B$, such that $\{\eta_\zeta V\}_{\zeta\in B}$
is a locally finite cover of $B$.
Indeed, we find first of all a locally finite refinement of the cover $\{\eta_\zeta V\}_{\zeta\in B}$,
which via the refinement map determines a subcover, which also is locally finite due
to the fact that all covering sets $\eta_\zeta V$ have the same finite geodesic diameter.

By induction we define subsets $Z_k= A_k \cup B_k \subset B$
such that the number of elements in $A_k$ equals $k$ and
$$
B=\bigcup_{\zeta \in A_k} \eta_\zeta V \cup \bigcup_{\zeta \in B_k} \eta_\zeta U\ ,
$$
and $\eta_{\zeta_1} U\cap \eta_{\zeta_2}U = \emptyset$ for any $\zeta_1 \in A_k, \zeta_2 \in
Z_k, \zeta_1 \not= \zeta_2$.
Furthermore $A_k \subset A_{k+1}$ for all $k$, while
$B_k \supset B_{k+1}$ with $\cap_{k=0}^\infty B_k= \emptyset$.
Since the cover $\{\eta_\zeta U\}_{\zeta \in Z_0}$ was locally finite, the latter implies
that any compact set
$K \Subset B$ is contained in $\bigcup_{\zeta \in A_k} \eta_\zeta V$ for sufficiently large
$k$. Finally take $Z:= \bigcup_{k=0}^\infty A_k$.
Begin with $A_0:= \emptyset, B_0:=Z_0$.
Let $\{ \zeta_j\}_{j\in\N}$ be an enumeration of $Z_0$.
Assuming that $A_k$, $B_k$ have already been constructed, let us construct $A_{k+1}$, $B_{k+1}$. Let
$j_k=\min\{j:\;\zeta_j\in B_k\}$. Set
$A_{k+1}:=A_k \cup \{ \zeta_{j_k} \}$ and let
$B_{k+1}:= \{ \zeta \in B_k; \eta_\zeta U \cap \eta_{\zeta_{j_k}}U= \emptyset \}$.
\end{proof}
\begin{corollary}
\label{cor:cover} Let $W\Subset B$ be an open set.
There exist a number $M\in\N$, and a countable set $Z\subset B$ such that the sets $\{\eta_\zeta W\}_{\zeta\in Z}$ cover $B$ with multiplicity not exceeding $M$.
\end{corollary}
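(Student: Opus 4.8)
\textit{Proof proposal.} The plan is to deduce the statement from Lemma~\ref{cover}: given the open set $W$, it suffices to exhibit a small open set $U\Subset B$ that, together with $W$, fits the hypotheses of that lemma. Recall that Lemma~\ref{cover} requires not only $U\Subset B$ open but also a relatively compact open set containing the ``inflated'' set $V=\bigcup\{\eta_\zeta U:\ \eta_\zeta U\cap U\ne\emptyset\}$; here I intend to use $W$ itself in that role, so the only thing to arrange is the inclusion $V\subseteq W$. (The extra conclusion of Lemma~\ref{cover}, that the sets $\eta_\zeta U$, $\zeta\in Z$, are pairwise disjoint, will simply not be used for the corollary.)

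First I would fix a point $p\in W$ and a radius $\rho>0$ so small that the geodesic ball $V_{3\rho}(p)$ of the Poincar\'e disk is contained in $W$; this is possible since $W$ is open. Set $U:=V_\rho(p)$. Being a geodesic ball in the complete manifold $\mathbb H^2$, $U$ is open and relatively compact in $B$. Since every $\eta_\zeta$ is an isometry of the Poincar\'e disk, $\eta_\zeta U=V_\rho(\eta_\zeta(p))$. If $\eta_\zeta U\cap U\ne\emptyset$, choosing $y$ in the intersection and applying the triangle inequality for the distance $d$ of $\mathbb H^2$ gives $d(\eta_\zeta(p),p)\le d(\eta_\zeta(p),y)+d(y,p)<2\rho$, hence $\eta_\zeta U=V_\rho(\eta_\zeta(p))\subseteq V_{3\rho}(p)\subseteq W$. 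Taking the union over all such $\zeta$ yields $V\subseteq W$.

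It then remains to invoke Lemma~\ref{cover} with this $U$ and with $W$: indeed $W\Subset B$ is open by hypothesis, $U\Subset B$ is open, and $W\supseteq V$, so the lemma provides $M\in\N$ and a countable $Z\subset B$ for which $\{\eta_\zeta W\}_{\zeta\in Z}$ covers $B$ with multiplicity at most $M$, which is precisely the assertion. The argument is essentially this single observation; the only point that needs care is the smallness of $U$, ensuring that the passage $U\mapsto V$ does not enlarge the set beyond $W$, and this is controlled purely by the fact that the M\"obius maps act as isometries on $\mathbb H^2$ together with the triangle inequality.
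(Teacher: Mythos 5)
Your proposal is correct and follows essentially the same route as the paper's proof: both pick a small geodesic ball $U=V_\rho(p)$ with $V_{3\rho}(p)\Subset W$, use the fact that the $\eta_\zeta$ are isometries together with the triangle inequality to conclude $V\subseteq V_{3\rho}(p)\subseteq W$, and then invoke Lemma~\ref{cover} with this $U$ and the given $W$. The only difference is cosmetic (you split the paper's three-term triangle estimate into two steps by first bounding $d(\eta_\zeta(p),p)<2\rho$).
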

\begin{proof}
Let $\epsilon>0$ and $x\in W$ be such that the geodesic ball $V_{3\epsilon}(x)\Subset W$. The corollary is immediate from Lemma~\ref{cover}  with $U=V_\epsilon(x)$ once we note  that the set \eqref{def:V} is contained in $V_{3\epsilon}(x)$. Indeed, let $y\in V$. Then there exist $\zeta\in Z$ and $z\in V_\epsilon(x)\cap V_\epsilon(\eta_\zeta x) $ such that $y\in\eta_\zeta V_\epsilon(x)$. Then, by the triangle inequality for the geodesic distance, $d(x,y)\le d(x,z)+d(z,\eta_\zeta x)+d(\eta_\zeta x,y)<3\epsilon $.
\end{proof}

\end{document}